\theoremstyle{plain}
\newtheorem{thmmain}{Theorem}
\newtheorem{thm}{\textsf{\textbf{Theorem}}}[section]
\newtheorem*{thm*}{\textsf{\textbf{Theorem}}}
\newtheorem{lem}[thm]{\textsf{\textbf{Lemma}}}
\newtheorem{cor}[thm]{\textsf{\textbf{Corollary}}}
\newtheorem{prop}[thm]{\emph{Proposition}}
\newtheorem*{prop*}{\emph{Proposition}}
\newtheorem*{claim*}{\emph{Claim}}
\newtheorem*{step*}{\emph{Step}}
\theoremstyle{definition}
\newtheorem{dfn}[thm]{\textbf{\textsf{Definition}}}
\newtheorem{notation}[thm]{\textbf{\textsf{Notation}}}
\newtheorem{rem}[thm]{{\textsf{Remark}}}
\newcommand{\cc}[1]{\mathcal #1}
\newcommand{\N}{\mathbb{N}}
\newcommand{\R}{\mathbb{R}}
\newcommand{\plane}{\mathbb R^{2}}
\DeclareMathOperator{\inter}{Int}
\DeclareMathOperator{\cl}{Cl}
\DeclareMathOperator{\diam}{diam}
\newcommand{\fr}{\partial}
\DeclareMathOperator{\sgnt}{sgn}
\newcommand{\sgn}[1]{\sgnt(#1)}
\DeclareMathOperator{\segg}{seg}
\newcommand{\seg}[1]{\segg(#1)}
\newcommand{\StableMan}[1]{W^{s}_{#1}}
\newcommand{\UnstableMan}[1]{W^{u}_{#1}}
\newcommand{\UnstableLocMan}[1]{W^{u}_{\text{loc},#1}}
\newcommand{\StableLocMan}[1]{W^{s}_{\text{loc},#1}}
\newcommand{\length}{l}
\newcommand{\Ryn}{\mathbb{R}^{1}_{-}}
\newcommand{\Ryp}{\mathbb{R}^{1}_{+}}
\newcommand{\Rn}{\mathbb{R}_{-}}
\newcommand{\Rp}{\mathbb{R}_{+}}
\newcommand{\Rc}{\mathbb{R}_{0}}
\newcommand{\Ryc}{\mathbb{R}^{1}_{0}}
\newcommand{\lozil}{f}
\newcommand{\lozi}{L}
\title{Strange attractors for the generalized Lozi-like family}
\author{Przemysław Kucharski}
\date{}
\providecommand{\mscclass}[1]
{
	\small	
	\textbf{\textit{MSC class---}} #1
}
\providecommand{\affiliations}[1]
{
	\small	
	\textbf{\textit{Author's affiliation}---} #1
}
\begin{document}
	\maketitle
	\date{}
	\affiliations{Faculty of Mathematics and Computer Science,
		Jagiellonian University in Krak\'{o}w
		ul. {\L}ojasiewicza 6, 30-348 Krak\'{o}w, Poland}\\
	\mscclass{37D45, 37E20}
	\begin{abstract}
		We generalize the Lozi-like family introduced in \cite{Lozi-likemaps}. The generalized Lozi-like family encompasses in particular certain Lozi-like maps \cite{Lozi-likemaps}, orientation preserving or reversing Lozi maps or large parameter regions of 2-dimensional border collision normal forms \cite{simpson:robust-chaos}. We prove that it possesses strange attractors, arising as a homoclinic class.
	\end{abstract}
	\section*{Introduction}
	Since the influential paper by Lorenz in 1963 \cite{lorenz:deterministic-flow}, hinting on what is now commonly referred to as chaos, strange attractors has been an object of intensive study. Nowadays, it is evident that systems possessing a strange attractor are important to the theory and applications (for references see for example \cite{joshi2016generalized}). As seen in the case of Lorenz \cite{lorenz:deterministic-flow}, Chua \cite{chua} or R\"ossler attractors \cite{ROSSLER1976397}, such systems are very often not amenable to rigorous mathematical analysis, even in presence of strong numerical evidence. Consequently, as a step towards better understanding Lorenz attractor, H\'enon introduced in \cite{henon1976} the H\'enon family, likely because of its similarity to a Poincaré section of a Lorenz attractor. Since then study of attractors of discrete time dynamical systems evolved on its own into rich, independent theory. Nonetheless, despite, in many cases, their simple and direct definitions, discrete time dynamical systems exhibiting strange attractors still pose a significant challenge. For example, it took enormous amount of effort to prove existence of strange attractors for H\'enon maps \cite{henon-dynamics-of}.
	
	In this context piecewise affine homeomorphisms of surfaces, showing certain degree of  hyperbolicity, were studied as a stepping stone towards more general families \cite{buzzi,strange-attractor-mis,simpson:robust-chaos,glendinning2019robust}, approximating behaviour of more complex maps, possibly sharing some features. Nonetheless, even on their own they offer objects which are approachable while still possessing complex dynamics, which study can enrich our knowledge of dynamical systems in general. Probably one of the first studied examples was the Lozi family. It was introduced by Ren\'e Lozi in 1978 \cite{lozi-first-article} as a simplification of the H\'enon family. He conjectured, based on numerical results, that it possesses a strange attractor for certain pair of parameters. In 1980 Michał Misiurewicz provided mathematical explanation of observed phenomena, presenting idea of proof of transitivity that was later used on at least a few occasions \cite{simpson:robust-chaos,simpson-gosh:robust-chaos,rychlik:invariant-measures,Lozi-likemaps,kucharski:strange-attractors}. Today, much is known about the Lozi family and much effort was put into finding its generalizations, potentially showing some differences in dynamics (see \cite{Lozi-likemaps} and discussion about kneading sequences). Broader family that is still possible to analyse using methods developed for the Lozi family should bring us closer to characterization-like theorems. In \cite{banerjee:robust-chaos} Soumitro Banerjee, James A. Yorke and Celso Grebogi discussed the 2-dimensional border-collision normal forms, a $4$-parameter piecewise affine planar maps generalizing the Lozi family, they expected that it has a strange attractor for an open set of parameters, where it is orientation preserving. It was rigorously proved in \cite{simpson:robust-chaos} by Paul Glendinning and David Simpson, but the authors used weaker definition of a strange attractor then the one present for example in works by Misiurewicz and Sonja \v{S}timac \cite{Lozi-likemaps,strange-attractor-mis}. In \cite{Lozi-likemaps} \v{S}timac and Misiurewicz introduced a family called the Lozi-like maps, modelling its properties on the Lozi family, and proved that it has strange attractors. The said family went beyond piecewise affine maps, allowing the possibility of certain kind of piecewise smooth maps. Let us note that their work was based on the paper by Misiurewicz \cite{strange-attractor-mis} and so Lozi-like maps were meant to be orientation reversing. Hence Lozi-like maps do not encompass the 2-dimensional border-collision normal forms and so their results cannot be used to strengthen findings of Glendinning and Simpson \cite{simpson:robust-chaos}. 
	
	The primary goal of this article is to find family of piecewise smooth planar homeomorphisms that would be applicable to both the Lozi-like maps and 2-dimensional border-collision normal forms. In doing so we propose conditions that are somewhat similar to ones defining Lozi-like maps and show that strange attractors are always by them implied, proving the following.
	\begin{thmmain}\label{theorem:attractor-as-homoclinic-class}
		Let $\cc{F}=\{\lozil_{\mu}\}_{\mu\in M}$ be a generalized Lozi-like family. Then we can find an open parameter region $\cc{U}\subset M$ such that for any $\mu\in \cc{U}$, the map $\lozil:=\lozil_{\mu}$ has a chaotic attractor $\cc{A}:=\bigcap_{n\in\N}\lozil^{n}(F)$, on which $\lozil|_{\cc{A}}$ is mixing. Moreover, if $\cc{F}$ is orientation reversing, $\cc{A}$ is maximal, that is it is a strange attractor. If $\cc{F}$ is orientation reversing, has a renormalization model with a $\gamma$-tangency curve at $\mu_{0}\in M_{0}\cap\cc{U}$, then $\cc{A}$ is maximal in some open parameter region in $\cc{U}$ with $\mu_{0}$ in the boundary.
	\end{thmmain}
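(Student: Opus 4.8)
The plan is to follow the geometric strategy that originates with Misiurewicz's analysis of the Lozi family and was subsequently adapted for the Lozi-like maps, reworking each step so that it uses only the defining hypotheses of a generalized Lozi-like family. First I would fix the saddle fixed point $X$ furnished by those hypotheses and exploit the invariant cone fields to obtain uniform hyperbolicity away from the singular line: unstable directions are expanded, stable directions contracted, and the cones are preserved by $D\lozil$ at every non-singular point. With this in hand I would construct the trapping region $F$ as a region bounded by finitely many arcs of the unstable manifold $\UnstableMan{X}$ together with pieces controlled by the stable cone, arranged so that $\overline{\lozil(F)} \subset \inter F$. This at once makes $\cc{A} = \bigcap_{n\in\N} \lozil^{n}(F)$ a compact forward-invariant attracting set, and reduces the theorem to understanding the internal dynamics on $\cc{A}$ together with its maximality. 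The region $\cc{U}$ is then carved out as the set of parameters for which this trapping construction and the cone estimates persist, which is open by continuity of the defining data.

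The second block of steps identifies $\cc{A}$ and proves mixing. I would show $\cc{A} = \overline{\UnstableMan{X}}$: every point of $\cc{A}$ lies on an unstable leaf because the cone condition forces the forward images of $F$ to be unions of increasingly long, uniformly transverse unstable curves, while conversely $\UnstableMan{X} \subset \cc{A}$ since the trapping property confines it to $F$. Topological mixing then follows from the Misiurewicz-type stretching argument: any unstable segment grows under iteration until its length exceeds a fixed geometric threshold, after which its image crosses the whole core of the attractor, so for open $U,V \subset \cc{A}$ one gets $\lozil^{n}(U)\cap V \neq \emptyset$ for all large $n$. The same stretching produces transverse intersections of $\UnstableMan{X}$ with $\StableMan{X}$ that are dense in $\cc{A}$, whence the homoclinic class of $X$ coincides with $\cc{A}$; this yields the ``arising as a homoclinic class'' conclusion and, through the associated horseshoes, positive entropy and chaoticity.

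It remains to treat maximality, which is where the two orientation regimes diverge. In the orientation-reversing case I would run the classical argument directly: the folding induced by $\lozil$ is compatible with the stable direction, so any point of $F$ whose entire forward orbit remains in $F$ must already lie on $\overline{\UnstableMan{X}}$; equivalently the maximal invariant subset of a neighbourhood of $F$ cannot exceed $\cc{A}$, and $\cc{A}$ is a genuine strange attractor. The delicate situation is the one governed by the renormalization model, where the naive trapping region may capture spurious invariant pieces and maximality is not automatic. \emph{This I expect to be the main obstacle.} The strategy is to use the renormalization model at $\mu_{0}\in M_{0}\cap\cc{U}$ to describe the first return dynamics, read off from the $\gamma$-tangency curve precisely on which side of $\mu_{0}$ the relevant branches of $\StableMan{X}$ and $\UnstableMan{X}$ separate so that no extraneous invariant set can survive, and then propagate this transversality by continuity to an open parameter region having $\mu_{0}$ on its boundary. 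The hard part will be controlling, uniformly in the parameter near $\mu_{0}$, the position of the tangency relative to the arcs of $\UnstableMan{X}$ that bound $F$, since a loss of control there is exactly what would permit a second invariant component to persist and thereby destroy maximality.
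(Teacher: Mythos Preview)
Your overall strategy matches the paper's for the first two blocks: the mixing and chaotic-attractor claims follow from the Misiurewicz stretching argument (the paper uses exactly the length estimate you allude to, with the threshold $\lambda>\sqrt{2}$ from condition \ref{L3}), and the orientation-reversing maximality is indeed the classical case, for which the paper simply cites Misiurewicz--\v{S}timac. Two minor corrections: the trapping region is \emph{assumed} in the definition of a Lozi-like map (condition \ref{L2}), not constructed from arcs of $\UnstableMan{X}$; and the identification $\cc{A}=\cl\UnstableMan{X}$ in the paper goes through density of $\StableMan{X}$ in $R$ (Corollary~\ref{corollary:stmani-dense-in-R}) rather than directly through an unstable lamination of $\bigcap_n\lozil^n(F)$.

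The real divergence is in the renormalization case, and here your framing in terms of ``transversality'' and ``separation of branches of $\StableMan{X}$ and $\UnstableMan{X}$'' misses the actual mechanism. The paper does not locate a tangency and perturb it transversally. Instead it builds a combinatorial first-return model on a triangle $H_{0}$ bounded by arcs of $\StableMan{X}$ and $\UnstableMan{X}$: the return domain is partitioned into strips $\hat{C}_{n}$, and Proposition~\ref{prop:renormalization-main} pins down the exact order (with respect to $\lhd_{u}$) of their images $\hat{U}_{n}$, governed by the sign pattern $-(\lambda^{-1}\epsilon)^{n-1}$. The crucial input you are missing is a degeneration lemma: for parameters approaching $M_{0}$ the cone coefficients vanish, so $H_{0}$ collapses toward $\Ryc$ and $\diam(\lozil(H)\cap\Ryc)\to 0$. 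This forces the fold-tip $\hat{h}(H_{0}\cap\Ryc)$ to land in $\hat{C}_{2}$, and the $\gamma$-tangency curve then fixes which of two explicit combinatorial scenarios occurs for $\hat{U}_{p}$. Maximality is proved by an itinerary-by-itinerary argument showing $\bigcap_{k}\hat{h}^{i_{k}}(\hat{C}_{t_{k}})\subset\cl\UnstableMan{X}$ for every admissible sequence, not by ruling out a ``second invariant component'' via transversality. Your proposed control of ``the position of the tangency relative to the arcs of $\UnstableMan{X}$ that bound $F$'' is therefore not the right lever; what is needed is the degeneration near $M_{0}$ together with the order structure of the $U_{n}$.
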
	
	Our exposition follows reasoning presented in the author's previous work \cite{kucharski:strange-attractors} and uses renormalization model from \cite{dyi-shing-ou:critical-points-I}. As the majority of ideas, needed for the proof of orientation preserving case of Theorem \ref{theorem:attractor-as-homoclinic-class}, were introduced in \cite{kucharski:strange-attractors}, the purpose of current work is introduction of proper definitions and showcasing that renormalization, the main tool used in the proof of \cite[Thm. B]{kucharski:strange-attractors}, is persistent under even not so small perturbations, given we stay in certain subfamily of the Lozi-like family. Moreover, we do not present any applications of Theorem \ref{theorem:attractor-as-homoclinic-class} and leave it for future works. It is clear that showing the renormalization can be applied to border collision normal forms and orientation reversing Lozi family is a matter of rudimentary, although possibly tedious, calculations.

	\section{Definitions}
	\subsection{Notions of attractor}
	We will strengthen results of Glendinning, Simpson and Gosh \cite{simpson-gosh:renormalization,simpson:robust-chaos}. We follow the definition introduced by John Milnor \cite{milnor:on-the-concept-of-attractor}. Let $h\colon X\to X$ be a continuous map on a topological space $X$, and $F\subset X$ be a closed subset. We define the realm of attraction $\rho(F)$ of $F$ as all those points $x\in X$ such that $\omega(x)\subset F$. 
	\begin{dfn}\label{def:attractor}
		A subset $A\subset X$ will be called an attractor if it satisfies the following\begin{enumerate}[label=(A\arabic*)]
			\item its realm of attraction $\rho(A)$ is of positive Lebesgue measure,
			\item for any closed $A'\subset A$ with $\rho(A')=\rho(A)$ we must have $A'=A$
		\end{enumerate}
	\end{dfn}
	Some authors require from the attractor additional properties. Nonetheless, we will be working with the above definition, as it allows wide variety of subsets to be attractors and so usually does not discriminate other definitions in use. Main object in this paper is an attractor that exhibits some chaotic properties. In papers on border collision normal form \cite{simpson:robust-chaos} and Lozi-like maps \cite{Lozi-likemaps} relevant to results in this article, there are two definitions of attractor with chaotic properties used. Let us first specify a few notions. An open subset $U\subset X$ will be called a trapping region if $h(\cl U)\subset U$. A compact set $F\subset X$ contained in a trapping region $U$ and satisfying $\bigcap_{n\in\N}h^{n}(U)=F$ will be called maximal in $U$. A map $h$ is said to be transitive if for any open sets $W,V\subset X$ we can find $i\in\N$ such that $h^{i}(W)\cap V\neq\emptyset$. On the other hand, if $h^{i}(W)\cap V\neq\emptyset$ holds for almost every $i\in\N$, then $h$ is said to be mixing. We are ready to state.
	\begin{dfn}\label{def:strange-chaotic-att}
		A compact, invariant subset $A\subset X$ will be called a strange attractor if it has a trapping region in which it is maximal. On the other hand, if $A\subset$ is an attractor such that $h|_{A}$ is transitive and periodic points are dense in $A$, $A$ will be called a chaotic attractor and said to have Devaney chaos.
	\end{dfn}
	\subsection{Universal invariant cones}
We begin by recalling necessary definitions from \cite{Lozi-likemaps}. For a unit vector $v\in\plane$ and a number $l\in(0,1)$ we define a cone $K$ in $\plane$ by \begin{equation}\label{eq:cone-dfn}
K:=\{u\in\plane\colon|\langle u,v \rangle|\geq l||u||\},
\end{equation} where $||\cdot||$ denotes the Euclidean norm and $\langle\cdot,\cdot\rangle$ a scalar product. The straight line $\R v$ is the axis of the cone. Two cones will be called disjoint if their intersection consists only of the vector $0$. Let $U\subset \plane$ be an open subset, we define a cone-field $\cc{C}$ on $U$ as the assignment of a cone $K_{z}$ to each point $P\in U$ such that the axis $v_{z}$ and the coefficient $l_{z}$ vary continuously with $z\in U$. In this way $z\mapsto v_{z}$ forms a vector field of axes' directions. 
\begin{dfn}
	We call a pair of cones $K^{u}$ and $K^{s}$ universal if they are disjoint and the axis of $K^{u}$ is the $x$-axis, and the axis of $K^{s}$ is the $y$-axis.
\end{dfn}
\begin{notation}
	Throughout the rest of the paper we will denote by $\alpha_{u}$ and $\alpha_{s}$ the unstable and stable coefficients of universal cones. Their axes will be denoted by $v_{u}=(1,0)$ and $v_{s}=(0,1)$.
\end{notation}
\begin{dfn}
	A family of cones $\cc{C}=\{K_{z}\}_{z\in\plane}$, where \[K_{z}=\{u\in\plane\colon|\langle u,v_{z} \rangle|\geq l_{z}||u||\}\] will be called a cone family if $z\mapsto l_{z}$ and $z\mapsto v_{z}$ are measurable transformations. The field $\cc{C}$ will be called a stable family of cones, respectively an unstable family of cones, if $K_{z}\subset K^{s}$, respectively $K_{z}\subset K^{u}$, for every $z\in\plane$.
\end{dfn}
\begin{dfn}
	Let $\gamma_{u}$, $\gamma_{s}$ be piecewise $C^{1}$ curves with tangent sections contained in respectively stable and unstable cone families. Then $\gamma_{u}$ will be called an $u$-curve and $\gamma_{s}$ an $s$-curve.
\end{dfn}
The following lemma can be found in \cite{Lozi-likemaps}. As it is crucial for the definition of the Lozi-like family, we present it below.
\begin{lem}\label{lemma:cone->gamma-a-function}
	For any $u$-curve $\gamma_{u}$ one can find a $C^{1}$ function $\phi_{u}\colon I_{u}\to \R$, where $I_{u}$ is an interval or $\R$, such that $\gamma_{u}=\{(x,\phi_{u}(x))\colon x\in I_{u}\}$. Similarly, any $s$-curve can be expressed as $\gamma_{s}=\{(\phi_{s}(y),y)\colon y\in I_{s}\}$ for some $C^{1}$ function $\phi_{s}\colon I_{s}\to \R$. Moreover, $\phi_{u}$, $\phi_{s}$ are Lipschitz continuous with constants, respectively $\sqrt{1-l^{2}}/l$ and $\sqrt{1-r^{2}}/r$, where $l$, $r$ are the coefficients in definitions of cones, respectively $K^{u}$ and $K^{s}$.
\end{lem}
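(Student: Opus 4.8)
The plan is to turn the cone condition into a quantitative bound on the horizontal component of the tangent, deduce that $\gamma_u$ projects injectively onto the $x$-axis, and then read off both the graph representation and the Lipschitz constant directly from the defining inequality of $K^u$.

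First I would fix a regular piecewise $C^1$ parametrization $\sigma(t)=(x(t),y(t))$ of $\gamma_u$. By hypothesis its tangent $\sigma'(t)$ belongs, wherever it is defined, to a cone $K_{\sigma(t)}\subset K^u$, and $K^u=\{(p,q)\in\plane\colon |p|\geq l\|(p,q)\|\}$ with $l\in(0,1)$. Hence $|x'(t)|\geq l\|\sigma'(t)\|>0$, so $x'(t)$ never vanishes; in particular $x$ is locally strictly monotone along $\gamma_u$ and the projection $(x,y)\mapsto x$ is a local homeomorphism on the curve.

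The key step is to promote this to global monotonicity of $x$, i.e.\ to rule out that $\gamma_u$ folds back on itself in the $x$-direction. Orienting $\gamma_u$ so that $x'>0$ on its initial piece, one argues that $x'$ cannot change sign: on each $C^1$ piece continuity of $\sigma'$ together with $x'\neq 0$ keeps the tangent in a single nappe of $K^u$, and the hypothesis that the pieces fit together into one arc whose tangent directions all lie in the unstable cone family excludes a reversal at the junction points. Then $x$ is a strictly increasing homeomorphism from the parameter interval onto an interval $I_u$, and setting $\phi_u:=y\circ x^{-1}$ gives $\gamma_u=\{(x,\phi_u(x))\colon x\in I_u\}$, with $\phi_u$ inheriting the $C^1$ regularity of $\gamma_u$ on each piece.

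Finally, on every $C^1$ piece $\phi_u'(x)=y'(t)/x'(t)$, and squaring $|x'|\geq l\|(x',y')\|$ yields $(y')^2/(x')^2\leq(1-l^2)/l^2$, that is $|\phi_u'|\leq\sqrt{1-l^2}/l$; being continuous with derivative bounded by this constant off the finite set of break points, $\phi_u$ is Lipschitz with constant $\sqrt{1-l^2}/l$. The statement for an $s$-curve is obtained verbatim after interchanging the $x$- and $y$-axes and replacing $K^u$, $l$ by $K^s$, $r$. I expect the genuine difficulty to lie in the global monotonicity step, since the cone condition is purely local and a priori only gives that $\gamma_u$ is locally a graph; the remaining assertions are immediate consequences of the inequality defining $K^u$.
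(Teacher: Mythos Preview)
The paper does not give its own proof of this lemma; it simply records it as a result from \cite{Lozi-likemaps}. So there is no in-paper argument to compare against, and your write-up is essentially the standard proof one would expect: extract $|x'|\geq l\|\sigma'\|$ from the cone inequality, use it to make $x$ a valid $C^1$ change of parameter on each smooth piece, and read off the Lipschitz bound $|\phi_u'|\leq\sqrt{1-l^2}/l$ from the same inequality. Those parts are clean and correct.

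The one point worth sharpening is exactly the one you flag. The cone $K^u$ is symmetric ($v\in K^u\iff -v\in K^u$), so the bare cone condition does \emph{not} by itself forbid a reversal of the sign of $x'$ at a break point: a piecewise linear simple arc that runs from $(0,0)$ to $(1,0)$ and then to $(0,1)$ has one-sided tangents $(1,0)$ and $(-1,1)/\sqrt{2}$, both in $K^u$ once $l<1/\sqrt{2}$, yet is not a graph over $x$. Your sentence ``the hypothesis that the pieces fit together into one arc whose tangent directions all lie in the unstable cone family excludes a reversal'' does not actually rule this out. What makes the lemma true as used is the convention, implicit here and explicit in \cite{Lozi-likemaps}, that a $u$-curve carries an orientation with tangent \emph{vector} in a single nappe of $K^u$ (equivalently, one fixes the sign of $\langle\gamma',v_u\rangle$ along the whole curve). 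Under that reading your global-monotonicity step is immediate, and the rest of your argument goes through unchanged; you should state this convention rather than appeal to the arc being simple, which is not enough.
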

We will say that a curve as above is infinite if the domain of the corresponding function is all of $\R$, if the domain is an interval we will say that the curve is compact.

We will also need the following.
\begin{lem}\label{lemma:uniqueness-of-int-Ku-Ks}
	Let $K^{u}$ and $K^{s}$ be a universal pair of cones. Let $\gamma_{u}$ and $\gamma_{s}$ be smooth infinite $u$-curve and $s$-curve. Then $\gamma_{u}$ and $\gamma_{s}$ intersect at exactly one point.
\end{lem}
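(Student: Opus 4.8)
The plan is to use Lemma~\ref{lemma:cone->gamma-a-function} to convert both curves into graphs of Lipschitz functions and then reduce the intersection problem to finding a zero of a single function of one real variable. By Lemma~\ref{lemma:cone->gamma-a-function}, since $\gamma_{u}$ is an infinite $u$-curve we may write $\gamma_{u}=\{(x,\phi_{u}(x))\colon x\in\R\}$ with $\phi_{u}$ Lipschitz of constant $\sqrt{1-\alpha_{u}^{2}}/\alpha_{u}$, and since $\gamma_{s}$ is an infinite $s$-curve we may write $\gamma_{s}=\{(\phi_{s}(y),y)\colon y\in\R\}$ with $\phi_{s}$ Lipschitz of constant $\sqrt{1-\alpha_{s}^{2}}/\alpha_{s}$. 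A point $(x,y)$ lies on both curves exactly when $y=\phi_{u}(x)$ and $x=\phi_{s}(y)$, i.e.\ when $x=\phi_{s}(\phi_{u}(x))$. Thus an intersection point corresponds precisely to a fixed point of the map $G:=\phi_{s}\circ\phi_{u}\colon\R\to\R$.

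First I would establish existence and uniqueness of such a fixed point via the contraction mapping principle. The map $G$ is Lipschitz with constant at most $L:=\frac{\sqrt{1-\alpha_{u}^{2}}}{\alpha_{u}}\cdot\frac{\sqrt{1-\alpha_{s}^{2}}}{\alpha_{s}}$, being the composition of two Lipschitz functions. The key observation is that the disjointness of the universal cones $K^{u}$ and $K^{s}$ forces $L<1$: disjointness means the cone around the $x$-axis and the cone around the $y$-axis meet only at $0$, which translates into the boundary slopes of $K^{u}$ being strictly smaller in magnitude than the boundary slopes of $K^{s}$, giving exactly the inequality $\frac{\sqrt{1-\alpha_{u}^{2}}}{\alpha_{u}}<\frac{\alpha_{s}}{\sqrt{1-\alpha_{s}^{2}}}$, equivalently $L<1$. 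Hence $G$ is a contraction on the complete metric space $\R$, and by the Banach fixed point theorem it has a unique fixed point $x^{*}$, yielding the unique intersection point $(x^{*},\phi_{u}(x^{*}))$.

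I expect the main obstacle to be the careful verification that cone-disjointness gives $L<1$ rather than merely $L\le 1$, since the whole argument collapses at the boundary case. The cleanest route is to compute the extremal slopes explicitly: a vector $u=(\cos\theta,\sin\theta)$ lies in $K^{u}$ iff $|\cos\theta|\ge\alpha_{u}$, so the boundary rays of $K^{u}$ have slope $\pm\sqrt{1-\alpha_{u}^{2}}/\alpha_{u}$, and symmetrically the boundary rays of $K^{s}$ have slope $\pm\alpha_{s}/\sqrt{1-\alpha_{s}^{2}}$. Disjointness of the two cones is then equivalent to the steepest line in $K^{u}$ being strictly flatter than the flattest line in $K^{s}$, which is precisely the strict inequality required. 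An alternative phrasing would avoid the Banach theorem and argue directly that $g(x):=\phi_{s}(\phi_{u}(x))-x$ is continuous, tends to $\mp\infty$ as $x\to\pm\infty$ (using the sublinear growth forced by $L<1$) hence has a zero by the intermediate value theorem, with uniqueness following from strict monotonicity of $g$; but the contraction argument packages existence and uniqueness together most economically.
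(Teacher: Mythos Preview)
The paper states this lemma without proof, so there is no argument to compare against. Your proof is correct and is the standard one: reducing to a fixed point of $\phi_{s}\circ\phi_{u}$ via Lemma~\ref{lemma:cone->gamma-a-function}, then checking that the product of the two Lipschitz constants is strictly less than $1$ precisely because the cones are disjoint. The slope computation you give is accurate, and the strict inequality is indeed guaranteed by the paper's definition of disjoint cones (intersection equal to $\{0\}$) together with $\alpha_{u},\alpha_{s}\in(0,1)$.
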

\begin{dfn}\label{def:directions}
	Let $L$ and $R$ be subsets of $\R$. Then $L$ is said to be to the left of $R$, denoted $L\lhd_{u} R$, if for any $u$-curve $\gamma_{u}=\{(x,\phi_{u}(x))\}_{x\in\R}$ the intersection $L\cap \gamma_{u}$ is less then $R\cap \gamma_{u}$ in the ordering on $\gamma_{u}$ induced from the projection $(x,\phi_{u}(x))\mapsto x$. Similarly, we will say that a sets $B$ is below the $U$, and write $B\lhd_{u} U$, if for any $s$-curve $\gamma_{s}=\{(\phi_{s}(y),y)\}_{y\in\R}$ the intersection $U\cap\gamma_{s}$ is less then $U\cap\gamma_{s}$ in the ordering on $\gamma_{s}$ induced from $(\phi_{s}(y),y)\mapsto y$. Ordering $\rhd_{s}$ and $\rhd_{u}$ are defined as reverse to $\lhd_{s}$ and $\rhd_{u}$. We treat here sets as a free family over $\{1,-1\}$, and set the rule $-1 A \lhd_{s} 1 B$ if and only if $-1 A \rhd_{s} 1 B$ for $A,B\subset \plane$. Similar convention will be used for other orderings.
\end{dfn}
\begin{rem}
	The ordering on $s$ or $u$-curve $\gamma$ induced from the projections as in Definition \ref{def:directions} will be referred to as natural ordering and denoted by $\lhd_{\gamma}$.
\end{rem}
\begin{dfn}\label{def:rectanle}
	Let $h\colon J:=I_{u}\times I_{s}\to \plane$ be a homeomorphism. Then $h(J)$ will be called a rectangle. Faces $h(\{0\}\times I_{s})$, $h(\{1\}\times I_{s})$ will be called vertical and faces $h(I_{u}\times\{0\})$, $h(I_{u}\times\{1\})$ horizontal. If vertical faces are $s$-curves and horizontal ones are $u$-curves, $h(J)$ will be called a $(u,s)$-rectangle. Moreover, vertical faces will be called left and right, according to their relative position and Def. \ref{def:directions}. Similarly, horizontal faces will be called upper and lower. A $s$-strip or $u$-strip that has corresponding interval comprising of a point will be called a $s$-curve or $u$-curve on $R$. If $I_{u}=[0,\infty)$, the rectangle will be said to be infinite to the right.
\end{dfn}
\begin{dfn}\label{def:strip}
	Let $h\colon [0,1]^{2}\to \plane$ define an $(u,s)$-rectangle $R=h(\{0\}\times[0,1])$. Let $\gamma_{s,1}$ and $\gamma_{s,2}$ be finite $s$-curves with endpoints connecting upper and lower faces of $R$, then a subset \[S(\gamma_{s,1},\gamma_{s,2}):=\{z\in R\colon \gamma_{s,1}\lhd_{u}z \lhd_{u} \gamma_{s,2}\},\] will be called an $s$-strip on $R$. Similarly, for $u$-curves $\gamma_{u,1}$, $\gamma_{u,2}$, we define a $u$-strip \[U(\gamma_{u,1},\gamma_{u,2}):=\{z\in R\colon \gamma_{u,1}\lhd_{s}z \lhd_{s} \gamma_{u,2}\}.\] A $s$-strip of a rectangle will be called proper it is a proper subset of $R$ and does not intersect  vertical faces of $R$. Analogously, a $u$-strip will be proper if it is a proper subset of $R$ that does not intersect horizontal faces of $R$.
\end{dfn}

\subsection{The generalized Lozi-like family}\label{section:lozi-like-dfn}
\begin{dfn}
	Cone fields $\cc{C}^{u}_{\mu}=\{K^{u}_{z,\mu}\}_{z\in\plane,\mu\in M}$ and $\cc{C}^{s}_{\mu}=\{K^{s}_{z,\mu}\}_{z\in\plane,\mu\in M}$ will be said to vary continuously if $(z,\mu)\mapsto (v_{u,z,\mu},v_{s,z,\mu})$ and $(z,\mu)\mapsto (l_{u,z,\mu},l_{s,z,\mu})$ are continuous.
\end{dfn}
\begin{dfn}
	Let $\lozil\colon \plane\to\plane$ be a $C^{1}$ diffeomorphism. Then $\lozil$ will be said to have invariant cone fields $\cc{C}^{u}=\{K^{u}_{z}\}_{z\in\plane}$ and $\cc{C}^{s}=\{K^{s}_{z}\}_{z\in\plane}$ subject to universal cones $K^{u}$ and $K^{s}$ with expansion $\lambda\in\R$ if the following holds
	\begin{enumerate}[label=(C\arabic*)]
		\item Cone field have their associated vector fields of axes $v_{u}=\{v_{u,z}\}_{z\in\plane}$ and $v_{s}=\{v_{s,z}\}_{z\in\plane}$, and coefficients $\{l_{u,z}\}_{z\in\plane}$, $\{l_{s,z}\}_{z\in\plane}$, that vary continuously with $z\in\plane$.
		\item For every point $z\in\plane$ we have $K^{u}_{z}\subset K^{u}$, ${df}_{z}(K^{u}_{z})\subset K^{u}_{f(z)}$ and $K^{s}_{z}\subset K^{s}$, ${df}^{-1}_{z}(K^{s}_{z})\subset K^{s}_{f^{-1}(z)}$,
		\item For every point $z\in\plane$, we have $||df(\textbf{w})||\geq \lambda ||\textbf{w}||$ for every $\textbf{w}\in K^{u}_{i}$ and $||df^{-1}(\textbf{w})||\geq \lambda ||\textbf{w}||$ for every $\textbf{w}\in K^{s}$.
	\end{enumerate}
	Moreover, if $\cc{F}=\{\lozil_{\mu}\}_{\mu\in M}$ is a family of $C^{1}$ diffeomorphisms satisfying the above, we will say that the family has cone fields $\cc{C}^{u}_{\mu}$ and $\cc{C}^{s}_{\mu}$ subject to universal cones, if $\cc{C}^{u}_{\mu}$ and $\cc{C}^{s}_{\mu}$ are continuous.
\end{dfn}
\begin{dfn}
	Let $f_{1},f_{2}\colon \plane\to\plane$ be $C^{1}$ diffeomorphisms. We say that $f_{1}$ and $f_{2}$ are $\lambda$-synchronous if
	\begin{enumerate}[label=(S\arabic*)]
		\item $f_{1}$ and $f_{1}$ are either both orientation reversing, or both orientation preserving 
		\item both $f_{1}$ and $f_{2}$ have the same invariant cone fields subject to universal cones with expansion constant $\lambda$.
		\item There exists a smooth curve $\Rc\subset\plane$, called the divider, which is an infinite $s$-curve, and its image is an infinite $u$-curve.
	\end{enumerate}
\end{dfn}
The divider splits the plane into two components $\Rn$ and $\Rp$ which we call, respectively, the left half-plane and the right half-plane. Similarly, $\Ryc:=f_{i}(\Rc)$ splits the plane into two components called the lower half-plane and the upper half-plane. We will denote them by $\Ryn$ and $\Ryp$. We may assume that directions suggested in names of half planes corresponds to the notion of directions introduced by Definition \ref{def:directions} and that $\lozil(\Rn)=\Ryn$, and $\lozil(\Rp)=\Ryp$. Let us also recall the notion of a stable and unstable manifold. Let $f\colon U\to \plane$ be a continuous bijection on an open set $U\subset \plane$. We define the local stable and unstable manifolds at $z\in\plane$ as, respectively
\[\StableLocMan{x,\epsilon}:=\{q\in\plane\colon d(f^{n}(z),f^{n}(q))\to0\text{ as }n\to\infty\text{, and  }d(z,q)<\epsilon\}\]\[
\UnstableLocMan{x,\epsilon}:=\{q\in\plane\colon d(f^{-n}(z),f^{-n}(q))\to0\text{ as }n\to\infty\text{, and  }d(z,q)<\epsilon\}.\] By $\StableLocMan{z}$ and $\UnstableLocMan{z}$ we will denote the local stable and unstable manifolds with $\epsilon>0$ maximal so that they are of class $C^{1}$. The global invariant manifolds are denoted by $\UnstableMan{z}$ and $\StableMan{z}$. 
	\begin{dfn}\label{definition:lozi-like-maps}
		Let $\lozil_{-}, \lozil_{+}\colon\plane\to\plane$ be $\lambda$-synchronous  $C^{1}$ diffeomorphisms with the divider $\Rc$. Let $\lozil\colon \plane\to\plane$ be defined by the formula\[
		\lozil|_{\cl\Rn}:=\lozil_{-}|_{\cl\Rn}\text{ and  }\lozil|_{\cl\Rp}:=\lozil_{+}|_{\cl\Rp}.
		\]We call $\lozil$ a gluing of $\lozil_{-}, \lozil_{+}$. Moreover, we call it a Lozi-like map if the following hold
		\begin{enumerate}[label=(L\arabic*)]
			\item\label{L1} $|\det d\lozil_{i,z}|<1$ for every $z\in\plane$ and $i=1,2$
			\item\label{L2} there exists a trapping region $G$ that has closure homeomorphic to a closed disc and $\inter G\cap \Rc\neq\emptyset$
			\item\label{L3} the expansion constant is big enough $\lambda>\sqrt{2}$.
			\item\label{L4} there is a fixed point $X\in G$ such that $\lozil(G)$ is disconnected by $\StableLocMan{X}$ into three components: one in the upper half plane, one in the lower half plane and one in the right half plane, with $\lozil(G\cap \Rc)\subset \Rp$.
		\end{enumerate}
	\end{dfn}
	A family, continuously dependent on $\mu$, $\{\lozil_{\mu}\}_{\mu\in M}$ of Lozi-like maps will be called a Lozi-like family. The above definition is the same as in \cite{Lozi-likemaps} and can be considered quite general, encompassing also family introduced by Young in \cite{young:SRB-for-piecewise-hyperbolic-maps}. Usually one needs to impose additional restrictions in order to derive properties present in the Lozi family. For use it will mean requiring certain geometric properties that later will allow us to introduce a renormalization model as in \cite{dyi-shing-ou:critical-points-I}. Effectively, it will prove that the renormalization model of Ou is robust under perturbations, as long as we stay in certain subfamily of Lozi-like maps. Let us introduce mentioned conditions and later take care of the existence of the model.
	\begin{dfn}\label{definition:ll-with-renormalization}
		A Lozi-like map $\lozi\colon\plane\to\plane$ will be called a Lozi-like map with a renormalization model, if the following conditions hold
		\begin{enumerate}[label=(R\arabic*)]
			\item\label{R1} there exists an invariant infinite to the right $(u,s)$-rectangle $\vec{R}$ based on a homeomorphism $r\colon[0,1]\times [0,\infty)\to\plane$ 
			\item\label{R2} the rectangle $R:=r([0,1]^{2})\supset G$ is an $(u,s)$-rectangle with left, right, lower and upper faces of $R$, satisfying respectively $R_{l}\subset \Rn$, $R_{r}\subset \Rp$, $R_{-}\subset \Ryn$, $R_{+}\subset \Ryn$, and with images positioned in such a way that $\lozil(R_{l})\subset R_{l}$ and $\lozil(\vec{R})\cap \fr R\cap \Rp \subset \Ryp\cap R_{l}$,
			\item\label{R3} there exists a fixed point $X\in\Rp$ such that its stable local manifold $\StableLocMan{X}\cap R\subset\Rp$ intersects upper and lower faces of $R$
			\item\label{R4} the set $\lozil(\vec{R})\cap \vec{R}$ is disconnected by $\StableLocMan{X}$ into three components: one $(u,s)$ rectangle in the upper half plane, another one in the lower half plane and a closed disc in the right half plane
		\end{enumerate}
	\end{dfn}
	A family $\{\lozil_{\mu}\}_{\mu\in M}$, that depends continuously on $\mu$ in $M$, and satisfying the above conditions, will be called a Lozi-like family with a renormalization model. Note that the left face $R_{l}$ is invariant and so necessarily contains a fixed point, denoted here by $Y$ and $Y\in $. It has a local unstable manifold on which it must be orientation preserving. Moreover, as tangent vectors to $R_{l}$ are contained in the stable cone family, $R_{l}\subset \StableLocMan{Y}$. Depending on whether the Lozi-like map preserves or reverses orientation, $\lozil(R_{-})$ may be to the right or the left of $\lozil(R_{-})$. It is not hard to convince yourself that if $\lozil$ preserves the orientation then $\lozil(R_{-})$ is to the right of $\lozil(R_{+})$. In the orientation reversing case, $\lozil(R_{-})$ is to the left of $\lozil(R_{+})$. To accommodate both the cases we can simply write $\epsilon\lozil(R_{+})\lhd_{u}\lozil(R_{-})$, where $\epsilon=-1$ if $\lozil$ reverses the orientation and $\epsilon=1$ otherwise. The variable $\epsilon$ will be fixed throughout the rest of the paper in the above manner. For later use, let us explicitly state one of conclusions from the discussion above.
	\begin{rem}\label{remark:orientation-on-unst}
		Note that regardless of whether a Lozi-like map preserves or reverses orientation in general, it always reverses the orientation on $\UnstableMan{X}$. Indeed, otherwise one could not hope to find an invariant region containing $X$, as one branch of $\UnstableMan{X}$ would escape to infinity.
	\end{rem}

	\begin{lem}\label{lem:orientation-on-stable-y}
		A Lozi-like map is orientation preserving if and only if it preserves orientation on $\StableLocMan{Y}$ if and only if it reverses orientation on $\StableLocMan{X}$. Similar statement holds for the orientation reversing case.
	\end{lem}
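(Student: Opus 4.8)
The plan is to reduce the whole statement to one elementary fact about saddle fixed points, namely that the sign of the Jacobian determinant factors as the product of the two orientation behaviours on the invariant manifolds, and then to plug in the two pieces of information about the \emph{unstable} directions that are already available: Remark~\ref{remark:orientation-on-unst} at $X$, and the orientation-preservation on $\UnstableLocMan{Y}$ recorded in the discussion just before the statement.

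First I would fix the local picture at each fixed point. Since $X\in\Rp$ and $Y\in R_{l}\subset\Rn$ sit in the open half-planes, a neighbourhood of $X$ lies in $\cl\Rp$ where $\lozil=\lozil_{+}$, and a neighbourhood of $Y$ lies in $\cl\Rn$ where $\lozil=\lozil_{-}$; thus $\lozil$ is $C^{1}$ near each of $X,Y$ and $d\lozil$ is genuinely defined there. The invariant cone splitting together with the expansion (C3) and the dissipativity $|\det|<1$ from \ref{L1} makes both points saddles with real eigenvalues $\lambda_{u},\lambda_{s}$, $|\lambda_{u}|\geq\lambda>1>|\lambda_{s}|$. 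The action of $\lozil$ on the local unstable manifold preserves orientation exactly when $\lambda_{u}>0$, and on the local stable manifold exactly when $\lambda_{s}>0$, while $\det d\lozil_{p}=\lambda_{u}\lambda_{s}$. This yields the key identity
\[
\sgn{\det d\lozil_{p}}=\sgn{\lambda_{u}}\cdot\sgn{\lambda_{s}},\qquad p\in\{X,Y\}.
\]

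Next I would use that the determinant has a globally constant nonzero sign: by (S1) the branches $\lozil_{\pm}$ are simultaneously orientation preserving or reversing, and by \ref{L1} the determinant never vanishes, so $\sgn{\det d\lozil_{+}}=\sgn{\det d\lozil_{-}}=:\sigma$, with $\sigma=+1$ precisely when $\lozil$ is orientation preserving. Plugging $\lambda_{u,X}<0$ (Remark~\ref{remark:orientation-on-unst}) into the identity at $X$ gives $\sgn{\lambda_{s,X}}=-\sigma$, so $\lozil$ reverses orientation on $\StableLocMan{X}$ iff $\sigma=+1$, i.e.\ iff $\lozil$ is orientation preserving. Plugging $\lambda_{u,Y}>0$ (the orientation-preservation on $\UnstableLocMan{Y}$ noted before the statement) into the identity at $Y$ gives $\sgn{\lambda_{s,Y}}=\sigma$, so $\lozil$ preserves orientation on $\StableLocMan{Y}$ iff $\sigma=+1$. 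Chaining the two equivalences proves the orientation preserving statement, and the orientation reversing statement is its pointwise negation.

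Since the two substantive inputs (the signs of $\lambda_{u,X}$ and $\lambda_{u,Y}$) are already supplied, no deep obstacle is expected; the only thing to handle with care is the bookkeeping. In particular I would want to confirm precisely that orientation of $\lozil$ along a one-dimensional $C^{1}$ local invariant manifold is governed by the sign of the corresponding eigenvalue (that the first-order action $v\mapsto\lambda v$ determines the orientation of the curve near the fixed point), and that the determinant identity is evaluated on the correct smooth branch $\lozil_{+}$ at $X$ and $\lozil_{-}$ at $Y$. These are the points where a sign error could creep in, so they are where I would be most attentive.
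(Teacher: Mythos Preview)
Your argument is correct and is the natural way to establish the lemma. The paper itself states Lemma~\ref{lem:orientation-on-stable-y} without proof, evidently treating it as an elementary consequence of the surrounding discussion; your write-up makes that implicit reasoning explicit by factoring $\sgn{\det d\lozil_{p}}=\sgn{\lambda_{u}}\sgn{\lambda_{s}}$ and feeding in the already recorded signs of $\lambda_{u}$ at $X$ (Remark~\ref{remark:orientation-on-unst}) and at $Y$ (the paragraph preceding the lemma). The only point worth a one-line justification, which you anticipate, is that the eigenvalues are real: at a fixed point $p$ the cone $K^{u}_{p}$ is mapped into itself by $d\lozil_{p}$ and $K^{s}_{p}$ by $d\lozil_{p}^{-1}$, so each cone contains an eigenvector, forcing real $\lambda_{u},\lambda_{s}$ with the required moduli by (C3) and \ref{L1}.
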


	\begin{lem}\label{lem:flipping-of-lozilike}
		Let $R$ be an $(u,s)$-rectangle. Denote by $\gamma_{s,1}$ and $\gamma_{s,2}$ its left and right faces, and by $\gamma_{u,1}$, $\gamma_{u,2}$ its lower and upper faces, that is $\gamma_{u,1}\lhd_{s}\gamma_{u,2}$. Suppose one can find an $s$-curve $\alpha$ such that $\lozil(\gamma_{s,1})\cup \lozil(\gamma_{s,2})\subset \alpha$ and $\lozil(\gamma_{s,1})\lhd_{\alpha}\lozil(\gamma_{s,2})$, then $-\epsilon\lozil(\gamma_{u,1})\lhd_{s}\lozil(\gamma_{u,2})$.
	\end{lem}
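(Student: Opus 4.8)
The plan is to deduce the conclusion from the cyclic order of the four faces on the boundary circle of $R$, transported by the homeomorphism $\lozil$ according to its orientation type $\epsilon$, with the hypothesis on $\alpha$ removing the remaining ambiguity.

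First I would fix orientations and names. Writing $h$ for the defining homeomorphism of the $(u,s)$-rectangle $R$ and $P_{ij}:=h(i,j)$ for its corners, I orient $\fr R$ so that $R$ lies on the left; this traversal meets the faces in the cyclic order $\gamma_{u,1},\gamma_{s,2},\gamma_{u,2},\gamma_{s,1}$, equivalently the corners in the order $P_{00},P_{10},P_{11},P_{01}$. Since $\lozil$ is a homeomorphism of the plane it carries $\fr R$ to the Jordan curve $\fr\lozil(R)$, preserving the cyclic order of the four face-images while sending the statement ``$R$ is on the left of $\fr R$'' to ``$\lozil(R)$ is on the left of $\fr\lozil(R)$'' precisely when $\epsilon=1$, and to ``$\lozil(R)$ is on the right'' when $\epsilon=-1$.

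Next I would read the hypothesis. The images $\lozil(\gamma_{s,1}),\lozil(\gamma_{s,2})$ are disjoint subarcs of the $s$-curve $\alpha$, and $\lozil(\gamma_{s,1})\lhd_\alpha\lozil(\gamma_{s,2})$ places $\lozil(P_{00}),\lozil(P_{01})$ (the endpoints of $\lozil(\gamma_{s,1})$) strictly below $\lozil(P_{10}),\lozil(P_{11})$ in the natural order of $\alpha$. The two remaining face-images $\lozil(\gamma_{u,1})$ and $\lozil(\gamma_{u,2})$ are then disjoint simple arcs, each joining the lower subarc to the upper one; since $\lozil$ is injective they cannot cross, so their four endpoints are nested rather than linked along $\alpha$, and this pins down the internal order of the endpoints on each subarc. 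The sense in which the boundary traversal runs along $\alpha$ (upward or downward on each subarc) is then forced by the side of $\alpha$ on which $\lozil(R)$ lies, and that side is governed by $\epsilon$ through the handedness recorded above; Lemma \ref{lem:orientation-on-stable-y} is what lets me fix the orientation behaviour on the vertical faces that are carried into $\alpha$.

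Finally, with the endpoints and the side determined, comparing $\lozil(\gamma_{u,1})$ and $\lozil(\gamma_{u,2})$ along $s$-curves transverse to both yields their relative position, and carrying the handedness of the first step through this comparison produces the factor $-\epsilon$; the sign rule for $\lhd_s$ on signed sets from Definition \ref{def:directions} absorbs the orientation and gives $-\epsilon\,\lozil(\gamma_{u,1})\lhd_s\lozil(\gamma_{u,2})$. I expect the main obstacle to be exactly this orientation bookkeeping at the end: one must follow the handedness across the bend that each connector acquires at the image $\lozil(\Rc)$ of the divider, and check that the $\lhd_s$-comparison of these bent images is genuinely well defined (carried by matching branches on each side of $\lozil(\Rc)$), for it is here that the orientation-preserving and orientation-reversing cases separate and the sign $\epsilon$ actually enters.
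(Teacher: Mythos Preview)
The paper states Lemma~\ref{lem:flipping-of-lozilike} without proof, so there is nothing in the text to compare your argument against directly. Your approach---transporting the cyclic order of the four faces along the Jordan curve $\fr R$ by the homeomorphism $\lozil$, and letting the orientation type $\epsilon$ decide on which side of $\lozil(\fr R)$ the disc $\lozil(R)$ lies---is the natural one and is the right idea.

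Two points deserve tightening. First, the appeal to Lemma~\ref{lem:orientation-on-stable-y} is out of place: that lemma speaks only about $\StableLocMan{X}$ and $\StableLocMan{Y}$, whereas here $\gamma_{s,1},\gamma_{s,2}$ are arbitrary $s$-faces of a $(u,s)$-rectangle. You do not need it. The global orientation of $\lozil$ already fixes whether the image cyclic order keeps $\lozil(R)$ on the left or on the right, and the hypothesis $\lozil(\gamma_{s,1})\lhd_{\alpha}\lozil(\gamma_{s,2})$ then removes the remaining freedom.

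Second, and more substantively, your final step ``comparing $\lozil(\gamma_{u,1})$ and $\lozil(\gamma_{u,2})$ along $s$-curves transverse to both'' does not realise $\lhd_{s}$ as written. Under the hypothesis the rectangle must straddle $\Rc$ (otherwise each $\lozil(\gamma_{u,i})$ would be a single $u$-curve with two distinct endpoints on the $s$-curve $\alpha$, impossible by Lemma~\ref{lemma:uniqueness-of-int-Ku-Ks}); hence each $\lozil(\gamma_{u,i})$ is a bent $u$-curve with one branch in $\Ryn$ and one in $\Ryp$, and a generic $s$-curve meets it twice. Neither image lies entirely below the other in the sense of Definition~\ref{def:directions}. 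What your Jordan-curve argument actually delivers is that $\lozil(\gamma_{u,1})$ and $\lozil(\gamma_{u,2})$ sit on opposite sides of $\alpha$, i.e.\ a $\lhd_{u}$ comparison, with the side determined by $\epsilon$. This is also how the lemma is invoked in the proof of Proposition~\ref{prop:renormalization-main}, where both applications draw the conclusion with $\lhd_{u}$. So either the $\lhd_{s}$ in the stated conclusion is a slip for $\lhd_{u}$, or the intended meaning is this left/right comparison relative to $\alpha$; in either case, make explicit that this is the ordering your argument produces.
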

	We will need one more notion, that is not present in Misiurewicz and \v{S}timac work, but can be found in Young's paper \cite{young:SRB-for-piecewise-hyperbolic-maps} and essentially accommodates the idea of a Lozi-like map being close to one dimension, allowing for certain perturbation arguments. Our definition can be seen as a bridge between analytic exposition of Young and more geometric one of Misiurewicz and \v{S}timac.
	\begin{dfn}\label{definition:lozi-like-family}
		A family $\cc{F}=\{\lozil_{\mu}\}_{\mu\in M}$ of gluing of synchronously hyperbolic maps will be called a Lozi-like family if $\cc{F}$, its invariant cone families and divider depend continuously on the parameter, and one can find subsets $M_{1}$, $M_{0}\subset M$, $M_{1}$ open, such that $\cc{F}|_{M_{1}}$ comprises of Lozi-like maps. Moreover, $\cc{C}^{u}|_{M_{1}}$ and $\cc{C}^{s}|_{M_{1}}$ have non-zero cones coefficients, and $\cc{C}^{u}|_{M_{0}}$ and $\cc{C}^{s}|_{M_{0}}$ have identically zero cones coefficients. That is, invariant cone families of $\cc{F}|_{M_{0}}$ degenerate to fields of axis. Additionally, $\cc{F}$ will be called a Lozi-like family with a renormalization if $\cc{F}|_{M_{1}}$ are Lozi-like maps with a renormalization.
	\end{dfn}
	\begin{rem}
		Note that a Lozi-like family will usually fail to contain Lozi-like maps in $M_{0}$. It is also the case for the Lozi family, which satisfies Definition \ref{definition:lozi-like-maps} only in a certain parameter region. The presence of ambient space of parameters is necessary if we want to use perturbation arguments.
	\end{rem}
	\section{The renormalization model}
	Assume $\lozil\colon\plane\to\plane$ is a Lozi-like map with a renormalization model. We closely follow the exposition by Ou \cite{dyi-shing-ou:critical-points-I}. Let $R$ be the invariant rectangle. We first define pull-backs of $s$-curves on $R$. Note that $\lozil(R)$ is folded along $\lozil(\Rc)$ and $\lozil(R\cap\Rc)$ is a $u$-curve. Let $u_{L}$ be the left utmost point of $\lozil(R\cap\Rc)$ and $u_{R}$ its right utmost point. Moreover, let $\omega$ be an $s$-curve on $R$. If $\omega\cap\lozil(R\cap\Rc) $ is to the left of $u_{L}$, then $\lozil^{-1}(\omega)\cap R$ contains two $s$-curves, one is $\Pi_{-}(\omega)$ and the other is $\Pi_{+}(\omega)$, where $\Pi_{\sigma}(\omega):=\lozil_{\sigma}^{-1}(\omega)\cap R$. In this way, the transformations $\Pi_{-}$ and $\Pi_{+}$ define pull-backs of $s$-curves by the two branches of $\lozil$. We define $s$-strips on $R$. Recall that $\StableLocMan{X}\cap R$ is an $s$-curve on $R$, connecting lower and upper faces of $R$. Let $\beta_{1}=\StableLocMan{X}\cap R$, and $\beta_{m}=\Pi_{-}(\beta_{m-1})$ for $2\leq m< \infty$, $\beta_{\infty}=R_{l}$, $\gamma_{m}=\Pi_{+}(\beta_{m})$ for $1\leq m <\infty$ and $\gamma_{\infty}=\Pi_{+}(\beta_{\infty})=R_{r}$. The $s$-curves $\{\beta_{m}\}_{1\leq m<\infty}$ are subsets of $\StableMan{X}$, while, as mentioned before, $\beta_{\infty}\cup\gamma_{\infty}\subset \StableMan{Y}$. Let $B=S(\beta_{2},\beta_{1})$, $C=S(\gamma_{1},\gamma_{\infty})$, $C_{m}=S(\gamma_{m-1},\gamma_{m})$ for $2\leq m < \infty$, and $D=S(\beta_{\infty},\gamma_{\infty})$. The sets $\{C_{m}\}_{2\leq m <\infty}$ form a partition of $C$.
	
	\begin{prop}\label{prop:renormalization-main}
		The first return map $h\colon C\to C$ is well defined and the following holds for $n\geq 2$.
		\begin{enumerate}[label=(\roman*)]
			\item\label{renorm:prop1} the sets $C_{n}$, are non-empty $(u,s)$-rectangles with boundary comprising of curves $\fr C_{n}^{u,-}$, $\fr C_{n}^{u,+}$ of $\UnstableMan{Y}$, positioned respectively in the lower and upper half plane, and curves $\fr C_{n}^{s,l}$, $\fr C_{n}^{s,r}$ of $\StableMan{X}$, with $\fr C_{n}^{s,l}$ to the left of $\fr C_{n}^{s,r}$,
			\item\label{renorm:prop3} $C_{n}$ is to the left of $C_{n+1}$, and $C_{n'}\cap C_{m'}=\fr C_{n'}^{s,r}=\fr C_{m'}^{s,l}$ for $n'=m'-1$ and otherwise empty
			\item\label{renorm:prop4} $h_{n}:=h|_{C_{n}}=f^{n}|_{C_{n}}$ and $h\colon C_{n}\to U_{n}$ is piecewise $C^{1}$ on two pieces,
			\item\label{renorm:prop5} there is an $s$-curve $S_{n}\subset \inter C_{n}$ with endpoints on $\fr C_{n}^{u,-}$ and $\fr C_{n}^{u,+}$ for which $h_{n}$ is $C^{1}$ on components $C_{n}^{l}$ and $C_{n}^{r}$ of $C_{n}\setminus S_{n}$. Moreover, $h(S_{n}):=T_{n}\subset \Ryc$,
			\item\label{renorm:prop6} $C_{n}^{l}$ is to the left of $C_{n}^{r}$,
			\item\label{renorm:prop7} $C_{n}$ is a union of two rectangles bordering on $S_{n}$
			\item\label{renorm:prop8} $U_{n}$ is a union of two  rectangles, namely $U_{n}^{+}:=h(C_{n}^{l})$ and $U_{n}^{-}:=h(C_{n}^{r})$ bordering on $T_{n}$, positioned, respectively, in the  upper and lower half planes,
			\item\label{renorm:prop9} the order $\lhd_{u}$ on $\{U_{n}\}_{n\geq 2}$ is isomorphic via $U_{n}\mapsto -(\lambda^{-1}\epsilon)^{n-1}$ with the order on $\{-(\lambda^{-1}\epsilon)^{n-1}\}_{n\geq 2}$ induced from the real line,
			\item\label{renorm:prop10} $-\epsilon^{n-1}\fr U_{n}^{u,r}\lhd_{u} \fr U_{n}^{u,l}$, where $\fr U_{n}^{u,l}:=h(\fr C_{n}^{u,-})$ and $\fr U_{n}^{u,r}:=h(\fr C_{n}^{u,+})$.
		\end{enumerate}
	\end{prop}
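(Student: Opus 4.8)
The plan is to reduce the whole statement to the forward itinerary of a single strip $C_n$ through the nested chain of strips $S(\beta_m,\beta_{m-1})$, and to defer all orientation questions to a final sign-bookkeeping step built on Lemma~\ref{lem:flipping-of-lozilike} and Remark~\ref{remark:orientation-on-unst}. First I would pin down the combinatorics of the return. Since $C=S(\gamma_1,\gamma_\infty)\subset\Rp$, on $C_n=S(\gamma_{n-1},\gamma_n)$ the map acts through the single branch $\lozil_+$, and the defining relation $\gamma_m=\Pi_+(\beta_m)$, i.e. $\lozil_+(\gamma_m)\subset\beta_m$, gives $\lozil(C_n)\subset S(\beta_n,\beta_{n-1})$. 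The intermediate strips $S(\beta_m,\beta_{m-1})$ with $m\ge 3$ lie to the left of the divider, where $\lozil=\lozil_-$ satisfies $\lozil_-(\beta_m)\subset\beta_{m-1}$, so each further application lowers the index by one and $\lozil^{k}(C_n)\subset S(\beta_{n-k+1},\beta_{n-k})$. After $n-1$ applications the orbit lands in $S(\beta_2,\beta_1)=B$, the unique strip of the chain straddling $\Rc$; the $n$-th application then folds $B$ across $\Ryc=\lozil(\Rc)$ and, by (R4), returns it into $C$. Since no earlier iterate meets $C$, the first-return time on $C_n$ is exactly $n$ and $h_n=\lozil^n|_{C_n}$, which is \ref{renorm:prop4}; nonemptiness of the $C_n$ and the partition property follow at once.

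I would then read off the boundary and ordering data. The vertical faces $\gamma_{n-1},\gamma_n$ are $\lozil_+^{-1}$-pullbacks of the curves $\beta_m\subset\StableMan{X}$, hence lie on $\StableMan{X}$ and furnish $\fr C_n^{s,l}$ and $\fr C_n^{s,r}$, while the horizontal faces are sub-arcs of the faces $R_-,R_+$ of $R$, which lie on $\UnstableMan{Y}$ by the invariant-manifold structure of $R$; this gives \ref{renorm:prop1}. Monotonicity of the pull-backs together with $\beta_\infty\lhd_u\cdots\lhd_u\beta_1$ forces $\gamma_1\lhd_u\gamma_2\lhd_u\cdots$, hence $C_n\lhd_u C_{n+1}$ and the shared-face description of \ref{renorm:prop3}. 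For the smooth structure, the orbit of $C_n$ meets the divider only at the last step, so the non-smoothness locus of $\lozil^n|_{C_n}$ is the single $s$-curve $S_n=\{z\in C_n:\lozil^{n-1}(z)\in\Rc\}$, the pull-back of $\Rc$ into $C_n$; this yields \ref{renorm:prop4}--\ref{renorm:prop5} with $h(S_n)=T_n\subset\Ryc$. The two components $C_n^l,C_n^r$ of $C_n\setminus S_n$ are genuine $(u,s)$-rectangles because cone invariance (C2) sends $s$- and $u$-curves to $s$- and $u$-curves, and the half-plane normalization $\lozil(\Rp)=\Ryp$, $\lozil(\Rn)=\Ryn$ identifies $U_n^+=h(C_n^l)$ and $U_n^-=h(C_n^r)$ as lying in the upper and lower half-planes, giving \ref{renorm:prop6}--\ref{renorm:prop8}.

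The step I expect to be the main obstacle is the quantitative ordering and sign bookkeeping of \ref{renorm:prop9} and \ref{renorm:prop10}. The geometric scale $\lambda^{-(n-1)}$ is the easy part: each of the $n-1$ smooth passages contracts the relevant transversal $u$-direction by a factor comparable to $\lambda^{-1}$, dual to the expansion in (C3), so the rectangles $U_n$ accumulate on the fixed point $Y$ at rate $\lambda^{-1}$. The signs are delicate. The factor $\epsilon^{n-1}$ should collect the orientation sign $\epsilon$ contributed by the $n-1$ smooth passages, whereas the leading minus sign is forced by the orientation reversal on $\UnstableMan{X}$ recorded in Remark~\ref{remark:orientation-on-unst}, which controls how the terminal fold across $\StableLocMan{X}$ reverses the transversal order. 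To make this precise I would apply Lemma~\ref{lem:flipping-of-lozilike} at the folding step to $B$, whose two vertical faces land on the common $s$-curve $\beta_1\subset\StableMan{X}$ in a definite order, and compose its $-\epsilon$ conclusion with the orientations of the earlier passages; this should produce both the order on $\{U_n\}$ isomorphic to $\{-(\lambda^{-1}\epsilon)^{n-1}\}$ and the face relation $-\epsilon^{n-1}\fr U_n^{u,r}\lhd_u\fr U_n^{u,l}$.

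The two genuinely delicate points are, first, verifying that the hypothesis of Lemma~\ref{lem:flipping-of-lozilike}---that the images of both vertical faces lie on one $s$-curve in the stated order---really does hold at the fold and at each reduction of the chain, and second, keeping the signed-set conventions of Definition~\ref{def:directions} coherent across the $n$-fold composition so that the exponent of $\epsilon$ and the single overall minus sign come out exactly as stated rather than off by a sign or by one. I would handle this by an induction on $n$ that simultaneously carries the position, the orientation, and the two half-plane assignments, using the base case $n=2$ (where $h_2=\lozil^2$ and only one fold occurs) as the anchor.
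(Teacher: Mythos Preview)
Your plan for \ref{renorm:prop1}--\ref{renorm:prop8} is essentially the paper's: the itinerary $C_n\to S(\beta_{n-1},\beta_{n-2})\to\cdots\to B\to C$ is exactly what is used, and the paper likewise dismisses these items as direct consequences of the construction together with the single pull-back $S_n$ of $\Rc$ and condition~\ref{R4}.

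For \ref{renorm:prop9}--\ref{renorm:prop10} you have the right ingredients but the paper executes the bookkeeping differently, and more cleanly, than the induction on $n$ you propose. Instead of tracking one $C_n$ at a time, the paper introduces the nested rectangles $K_m$ (defined by $K_2=\lozil(C)$, $K_m=\lozil(K_{m-1})\setminus\lozil^{m}(C_m)$), all of which stretch from $\beta_\infty$ to $\beta_1$, and measures a \emph{signed distance} $\theta_{i,m}$ from $K_m$ to $\UnstableLocMan{Y}$ along each $\beta_i$. The single recursion $|\theta_{i,m}|\le\lambda^{-1}|\theta_{i+1,m-1}|$ and $\sgn\theta_{i,m}=\epsilon\,\sgn\theta_{i+1,m-1}$ then produces the whole $\lhd_s$ order on $\{\lozil^{m-1}(C_m)\}_{m\ge2}$ simultaneously (isomorphic to $\{\lambda^{-m}\epsilon^{m-2}\}$), after which Lemma~\ref{lem:flipping-of-lozilike} is applied once to convert this to the $\lhd_u$ order on $\{U_m\}$. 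This avoids having to verify the hypothesis of the flipping lemma at every stage, which you correctly flagged as a delicate point; in the paper it is checked only at the terminal step, where both vertical faces of $\lozil^{m-1}(C_m)\subset B$ land on $\beta_1$.

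One small misattribution: the leading minus sign does not come from Remark~\ref{remark:orientation-on-unst}. It is produced by the $-\epsilon$ in the conclusion of Lemma~\ref{lem:flipping-of-lozilike} at the final fold, composed with the accumulated $\epsilon^{m-2}$ from the preceding passages along the $\beta_i$; the orientation on $\UnstableMan{X}$ plays no direct role here. Your inductive scheme would still work, but keeping the signs straight is easier with the $\theta_{i,m}$ device.
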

	\begin{proof}
		Claims from \ref{renorm:prop1} to \ref{renorm:prop3} are direct consequences of the construction. To show \ref{renorm:prop4} we simply note that $S_{n}$ are consecutive pull-backs of $\Rc$ via $\Pi_{-}$ and $\Pi_{+}$, similarly as $\beta_{n}$ and $\gamma_{n}$. In a direct way points \ref{renorm:prop5} and \ref{renorm:prop6} follow from the above. The point \ref{renorm:prop7} follows from analogous property for the rectangle $R$, given by condition \ref{R4}. The hardest part of the proof is to show the last two claims. With that in mind, let us first note that $\UnstableLocMan{Y}$ is contained in the lower half plane and intersects $\Ryc$ in the right half plane. It is a consequence of \ref{R4} and the fact that $\StableLocMan{X}$ is contained in the right half plane. Therefore, every $\beta_{m}$, $m=1,2,...,$ intersects $\UnstableLocMan{Y}$ transversally in the lower half plane. Define inductively $K_{2}:=\lozil(C)$ and then $K_{m}:=\lozil(K_{m-1})\setminus f^{m}(C_{m})$ for $m>2$. It follows from the construction that $K_{m}$ is a $(u,s)$-rectangle with the left face on $\StableLocMan{Y}$ and the right face contained in $\beta_{1}$. Hence, every $\beta_{i}$, $i=1,...$, intersects both the upper and lower faces of every $K_{m}$, $m=2,...$, transversally. We define the signed distance between $K_{m}$ and $\UnstableLocMan{Y}$ along $\beta_{i}$ by $\theta_{i,m}:=\tau_{s}\length(s)$, where $s$ is the curve contained in $\beta_{i}$ connecting $K_{m}$ and $\UnstableLocMan{Y}$ with the smallest length, and $\tau_{s}=1$ if the corresponding curve is above $\UnstableLocMan{Y}$ and $\tau_{s}=-1$ otherwise. Now, it follows from the existence of stable invariant cone family that \begin{equation}\label{eq:2}
		|\theta_{i,m}|\leq\lambda^{-1}|\theta_{i+1,m-1}|
		\end{equation} for $i\geq 1$. Moreover, as $\lozil$ preserves or reverses orientation on $\beta_{i}$ according to the sign of $\epsilon$ we also have \begin{equation}\label{eq:1}
		\sgn{\theta_{i,m}}=\epsilon\sgn{\theta_{i+1,m-1}}.
		\end{equation} Iterating \eqref{eq:1} we obtain $\sgn{\theta_{1,m}}=\epsilon^{m-2}\sgn{\theta_{m-1,2}}=\epsilon^{m-2}$. As a consequence of \eqref{eq:2}, $K_{m}$ is closer to $\UnstableLocMan{Y}$ than $K_{m'}$ if $m>m'$. Such $K_{m}$ and $K_{m'}$ must be disjoint and connect $\beta_{\infty}$ with $\beta_{1}$. It follows that the order $\lhd_{s}$ on $\{f^{m-1}(C_{m})\}_{m\geq 2}$ is isomorphic with the natural one on $\{\lambda^{-m}\epsilon^{m-2}\}_{m\geq 2}$ via $f^{m-1}(C_{m})\mapsto \lambda^{-m}\epsilon^{m-2}$. It now remains to apply Lemma \ref{lem:flipping-of-lozilike} to conclude that the order on $\{U_{m}\}_{m\geq 2}$ is isomorphic with the natural order on $\{-(\lambda^{-1}\epsilon)^{m-1}\}_{m\geq 2}$, proving Claim \ref{renorm:prop8}. The last claim follows similarly from Lemma \ref{lem:flipping-of-lozilike}. Let us write down a few details. The upper face of $K_{2}$ is $\lozil(\fr C^{+})$, while the lower is $\lozil(\fr C^{-})$, where $\fr C^{+}$ and $\fr C^{-}$ are upper and lower faces of $C$. Upper and lower faces of $K_{m}$ are, therefore, $\lozil^{m-1}(\fr C^{+})$ and $\lozil^{m-1}(\fr C^{-})$. We have $\lozil(\fr C^{-})\lhd_{s}\lozil(\fr C^{+})$. As, after $m-2$ iterates under $\lozil$, the order of $\lozil(\fr C^{-})$ and $\lozil(\fr C^{+})$ is reversed or preserved, depending on $\epsilon$, $m-2$ times, we have $\lozil^{m-1}(\fr C^{-})\epsilon^{m-2}\lhd_{s}\lozil^{m-1}(\fr C^{+})$. Applying Lemma \ref{lem:flipping-of-lozilike} to $\lozil^{m-1}(\fr C^{u,-}_{m})=\lozil^{m-1}(\fr C^{-})\cap B$ and $\lozil^{m-1}(\fr C^{u,+}_{m})=\lozil^{m-1}(\fr C^{+})\cap B$, we obtain $-\epsilon^{m-1}\lozil^{m}(\fr C^{u,-}_{m})\lhd_{u}\lozil^{m}(\fr C^{u,+}_{m})$, proving the claim.
	\end{proof}
	We are now ready to explain what is meant by a $\gamma$-tangency curve.
	\begin{dfn}\label{definition:gamma-tangency-curve}
		Let $\cc{F}$ be a Lozi-like family with a renormalization and $u_{\mu}$ be the first turn of $\UnstableMan{Y}$ inside $R$ for the parameter $\mu\in M$, that is $\{u_{\mu}\}=\lozil(\UnstableLocMan{Y}\cap \Rc)$. Then $\lozil$ is said to have a $\gamma$-tangency curve at $\mu_{0}\in M_{0}$, if one can find a parameter curve $\kappa\colon [0,1)\to M_{1}$ such that $u_{\kappa(t)}\in \gamma_{i}$ for some $i\geq 2$ and every $t\in[0,1)$, and $\kappa(t)=\mu_{0}$.
	\end{dfn}
	\section{Density of the stable manifold of $X$}\label{section:density-of-homoclinic-intersections}
		In this section we show that the stable manifold of $X$ is dense in $R$. The proof essentially does not differ from the one presented in \cite{Lozi-likemaps}, although we present more details of necessary computations.
		
		For a piecewise smooth curve $\gamma\colon I:=[0,1]\to \plane$ let $\gamma'=(\gamma_{x}',\gamma_{y}')$ be the section of vectors tangent to $\gamma$ restricted to its domain $\bar{I}\subset I$ of differentiability. Formally, $d\gamma(\dfrac{d}{dt})=\gamma'$, where $d\gamma$ is a differential of $\gamma$ defined whenever $\gamma$ is smooth, and $\dfrac{d}{dt}$ is a vector generating the tangent space $TI$ of $I$. We define the length of $\gamma$ by $\length(\gamma):=\int ||\gamma'||$. Moreover, whenever we write $\gamma'\subset \cc{C}$ and $\cc{C}=\{K_{z}\}$ is a cone family, we will mean that $\gamma'(t)\in K_{\gamma(t)}$ for every $t\in\bar{I}$. We begin the proof by the following lemma.
	\begin{lem}\label{lemma: smooth-arcs-bounded}
		Any smooth arc $\gamma\subset R$ with $\gamma'\subset \cc{C}^{u}$ satisfies $\length(\gamma)\leq \alpha_{u}^{-1} \diam R$.
	\end{lem}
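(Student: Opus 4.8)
The plan is to reduce the length estimate to a pointwise slope bound coming from the unstable cone condition. First I would invoke Lemma \ref{lemma:cone->gamma-a-function} to write $\gamma$ as a graph over the $x$-axis: since $\gamma$ is a smooth arc with $\gamma'\subset\cc{C}^{u}$, and every cone of the unstable family is contained in $K^{u}$, the curve $\gamma$ is an $u$-curve, so there is a $C^{1}$ function $\phi_{u}\colon[x_{0},x_{1}]\to\R$ with $\gamma=\{(x,\phi_{u}(x))\colon x\in[x_{0},x_{1}]\}$. In particular the $x$-coordinate is monotone along $\gamma$, so $x_{1}-x_{0}$ is exactly the horizontal extent of $\gamma$. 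Parametrizing by $x$ gives
\[
\length(\gamma)=\int_{x_{0}}^{x_{1}}\sqrt{1+\phi_{u}'(x)^{2}}\;dx.
\]

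The key step is to bound the integrand by $\alpha_{u}^{-1}$ pointwise. The tangent direction at $x$ is $(1,\phi_{u}'(x))$, and since $\gamma'\subset\cc{C}^{u}\subset K^{u}$ this direction lies in $K^{u}$. Unwinding the definition \eqref{eq:cone-dfn} of $K^{u}$ with axis $v_{u}=(1,0)$ and coefficient $\alpha_{u}$ yields
\[
1=|\langle (1,\phi_{u}'),v_{u}\rangle|\geq \alpha_{u}\,||(1,\phi_{u}')||=\alpha_{u}\sqrt{1+\phi_{u}'^{2}},
\]
whence $\sqrt{1+\phi_{u}'^{2}}\leq\alpha_{u}^{-1}$. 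Equivalently, one may feed the Lipschitz constant $\sqrt{1-\alpha_{u}^{2}}/\alpha_{u}$ supplied by Lemma \ref{lemma:cone->gamma-a-function} into $\sqrt{1+\phi_{u}'^{2}}$ and simplify to the same value.

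Combining the two displays gives $\length(\gamma)\leq\alpha_{u}^{-1}(x_{1}-x_{0})$, and it remains to observe that $x_{1}-x_{0}\leq\diam R$. This is immediate: the endpoints $\gamma(x_{0}),\gamma(x_{1})$ lie in $R$, so their horizontal separation $x_{1}-x_{0}$ is at most their Euclidean distance, which is bounded by $\diam R$. This yields $\length(\gamma)\leq\alpha_{u}^{-1}\diam R$, as claimed.

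I do not anticipate any genuine obstacle, since the argument is a direct computation. The only points requiring care are that Lemma \ref{lemma:cone->gamma-a-function} indeed applies, so that $\gamma$ is a graph over $x$ and the $x$-coordinate is monotone, making the horizontal contribution collapse to the extent $x_{1}-x_{0}$ rather than a larger total variation; and that the coefficient governing the bound is that of the \emph{universal} cone $K^{u}$, because the containment $\cc{C}^{u}\subset K^{u}$ only guarantees the inequality with coefficient $\alpha_{u}$ (a tighter cone would give an even smaller integrand, so the stated bound is safe).
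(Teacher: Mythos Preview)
Your proof is correct and follows essentially the same approach as the paper: both arguments use the cone inequality $|\langle\gamma',v_{u}\rangle|\geq\alpha_{u}\|\gamma'\|$ pointwise, invoke Lemma~\ref{lemma:cone->gamma-a-function} to realize $\gamma$ as a graph over the $x$-axis, and then integrate to bound $\length(\gamma)$ by $\alpha_{u}^{-1}$ times the horizontal extent, which is at most $\diam R$. The only cosmetic difference is that you parametrize by $x$ from the outset, whereas the paper keeps a general parametrization on $[0,1]$ and separately identifies $\int|\langle\gamma',v_{u}\rangle|$ with the length of the $x$-projection.
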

	\begin{proof}
		Let $\gamma=(\gamma_{x},\gamma_{y})\colon[0,1]\to\plane$ satisfy the assumptions. Recall, we denoted by $\alpha_{u}$ the coefficient of $K^{u}$ and $v_{u}:=(1,0)$ the axis of $K^{u}$. From the definition, \begin{equation}\label{eq:no-smooth-1}
		||\gamma'||\leq \alpha_{u}^{-1} |\langle \gamma',v_{u}\rangle|.
		\end{equation} Note that, as $\gamma'_{t}\subset K_{\gamma(t)}^{u}\subset K^{u}$, by Lemma \ref{lemma:cone->gamma-a-function} $\gamma$ is a graph of a function  with respect to the $x$-axis. That is \[ \phi\colon \gamma(\bar{I})_{x}\ni\gamma_{x}(t)\mapsto \gamma_{y}(t)\in \gamma(\bar{I})_{y} \] is a function, where $(\cdot)_{x}$ and $(\cdot)_{y}$ are projections on $x$-axis and $y$-axis. Moreover, $\gamma_{x}(t)=\gamma_{x}(t_{0})+\int_{s=t_{0}}^{t}\langle \gamma'(s),v_{u}\rangle ds$, where $t\in(t_{0},t_{1})\subset\bar{I}$, and $t_{0}$, $t_{1}$ are endpoints of a maximal interval in $\bar{I}$, and so the length of $\gamma(\bar{I})_{x}$ is equal to $|\int\langle \gamma',v_{u}\rangle|$. As $\langle \gamma',v_{u}\rangle$ is either positive or negative on $(t_{0},t_{1})$, the length of $\gamma(\bar{I})_{x}$ is equal to $\int|\langle \gamma',v_{u}\rangle|$. Integrating both sides of \eqref{eq:no-smooth-1}, since $\gamma(\bar{I})_{x}\subset R_{x}$, we obtain \[
		\length(\gamma)\leq \alpha_{u}^{-1}|\gamma(\bar{I})_{x}|\leq \alpha_{u}^{-1} \diam R,\]which proves the claim.
	\end{proof}
	\begin{lem}\label{proposition:st-mani-X-intersects-arcs}
		If the condition \ref{R4} is satisfied, the stable manifold of $X$ intersects any smooth arc $\gamma\subset R$ with $\gamma'\subset \cc{C}^{u}$.
	\end{lem}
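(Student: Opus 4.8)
The plan is to reduce the statement to producing a single crossing of the divider $\Rc$ by some forward image of $\gamma$, and then to force such a crossing using the expansion of $u$-arcs. First I record the two mechanical facts I will use repeatedly. By the invariance of the unstable cone field together with the expansion hypothesis, $\lozil$ carries a $u$-arc to a $u$-arc whose length is multiplied by at least $\lambda$; and by Lemma~\ref{lemma:uniqueness-of-int-Ku-Ks} a $u$-arc meets the divider $\Rc$ (an $s$-curve) in at most one point, so the image is again a connected $u$-arc carrying at most one corner, at the fold $\lozil(\gamma\cap\Rc)$. Since $\StableMan{X}=\bigcup_{n\ge 0}\lozil^{-n}(\StableLocMan{X})$ and any preimage of a point of $\StableMan{X}$ again lies in $\StableMan{X}$ (if $\lozil^{k}(p)\in\StableMan{X}$ then $\lozil^{n}(p)\to X$), it suffices to exhibit some $k$ with $\lozil^{k}(\gamma)\cap\StableLocMan{X}\neq\emptyset$: pulling back then yields a point of $\gamma\cap\StableMan{X}$.

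Next I set up the folding mechanism. I claim that if a forward image $\lozil^{k}(\gamma)$ crosses $\Rc$, then $\lozil^{k+1}(\gamma)$ crosses $\StableLocMan{X}$. Indeed $\gamma\subset R\subset\vec{R}$, and $\vec{R}$ is invariant by \ref{R1}, so every iterate satisfies $\lozil^{j}(\gamma)\subset\lozil(\vec{R})\cap\vec{R}$ for $j\ge 1$. Crossing $\Rc$ means having points in both $\Rn$ and $\Rp$; applying $\lozil$ and using $\lozil(\Rn)=\Ryn$ and $\lozil(\Rp)=\Ryp$, the connected arc $\lozil^{k+1}(\gamma)$ then has points in both the lower and the upper half planes. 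By \ref{R4} the curve $\StableLocMan{X}$ separates $\lozil(\vec{R})\cap\vec{R}$ into its lower rectangle, its upper rectangle and the right disc, so a connected arc joining the two half planes inside this set must meet $\StableLocMan{X}$. Combined with the reduction above, a single divider crossing by any forward image completes the proof.

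It therefore remains to produce $k$ with $\lozil^{k}(\gamma)$ crossing $\Rc$. Suppose no forward image does; then each $\lozil^{n}(\gamma)$ lies in a single half plane and is a genuine $u$-arc (a fold appears only at a divider crossing). By Lemma~\ref{lemma:cone->gamma-a-function} such an arc is a graph over the $x$-axis whose horizontal extent is at least a fixed multiple of its length, while the expansion gives $\length(\lozil^{n}(\gamma))\ge\lambda^{n}\length(\gamma)$ with $\lambda>\sqrt{2}$; and as long as the arc remains in $R$, Lemma~\ref{lemma: smooth-arcs-bounded} bounds that length by $\alpha_{u}^{-1}\diam R$. Hence the horizontal extent must saturate, forcing the arc to reach both vertical faces $R_{l}\subset\Rn$ and $R_{r}\subset\Rp$ (by \ref{R2}); an arc touching both faces has points in both half planes and so crosses $\Rc$, a contradiction. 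The \emph{main obstacle} is precisely this positional control: one must rule out that the growing arc escapes $R$ to the right into the infinite part $\vec{R}\setminus R$ \emph{before} it spans $R$, and this is where the invariance of $\vec{R}$ from \ref{R1} and the boundary positioning in \ref{R2} (keeping $\lozil(\vec{R})$ away from the right portion of $\fr R$) are essential. Controlling the location of the arc, and not merely its length, is the delicate part of the argument.
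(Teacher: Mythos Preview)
Your reduction in the second paragraph contains a genuine error. You claim that if $\lozil^{k}(\gamma)$ crosses $\Rc$ then $\lozil^{k+1}(\gamma)$, having points in both $\Ryn$ and $\Ryp$, must meet $\StableLocMan{X}$ by \ref{R4}. But the right disc component of $(\lozil(\vec{R})\cap\vec{R})\setminus\StableLocMan{X}$ itself straddles $\Ryc$: for \ref{R4} to produce three components the fold $\lozil(\Rc\cap\vec{R})\subset\Ryc$ must lie to the right of $\StableLocMan{X}$, so the right disc contains the fold together with portions of both arms of $\lozil(\vec{R})$, and hence points in both $\Ryp$ and $\Ryn$. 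A short arc crossing $\Rc$ close to the preimage of the fold tip maps to a small wedge lying entirely in the right disc, with one arm in each half-plane, never touching $\StableLocMan{X}$. The correct separation criterion is that some iterate meets \emph{both} $\Rc$ and $\Ryc$: a point in $\Rn$ cannot lie in the right disc, so it sits in one of the two rectangles, and a point in the opposite $\Ryc$-half-plane then forces a second component and thus a crossing of $\StableLocMan{X}$.

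The paper argues along these lines. Assuming no iterate meets both $\Rc$ and $\Ryc$, a crossing of $\Rc$ at step $j$ places a corner on $\Ryc$ at step $j+1$, so by hypothesis $\lozil^{j+1}(\gamma)$ cannot also cross $\Rc$; thus the number of smooth pieces at most doubles every \emph{two} iterations, yielding at most $2^{i}$ pieces after $2i$ steps. Combined with Lemma~\ref{lemma: smooth-arcs-bounded} and $\length(\lozil^{2i}(\gamma))\ge\lambda^{2i}\length(\gamma)$, this forces $\lambda\le\sqrt{2}$ for large $i$, contradicting \ref{L3}. Your argument, were it valid, would need only $\lambda>1$; that you never invoke the threshold $\sqrt{2}$ from \ref{L3} is itself a signal that the separation step you rely on is too strong to hold. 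Your third paragraph is also left unfinished, as you yourself concede: the positional control keeping iterates inside $R$ is asserted but not established.
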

	\begin{proof}Let $\gamma\subset R$ satisfy $\gamma'\subset\cc{C}^{u}$. By condition \ref{R4} it is enough to prove that $\lozil^{i}(\gamma)$ intersects both $\Rc$ and $\Ryc$. After $2i$ iterations of $\gamma$ we $\lozil^{2i}(\gamma)$ will comprise of at most $2^{i}$ smooth curves, each with tangent vectors contained in $\cc{C}^{u}$. Thus, by Lemma \ref{lemma: smooth-arcs-bounded}, $\length(\lozil^{2i}(\gamma))\leq 2^{i}\alpha_{u}^{-1}\diam R$. On the other hand, since $\gamma'\subset\cc{C}^{u}$, integral computation shows $\length(f^{2i}(\gamma))\geq \lambda^{2i}\length(\gamma)$, which gives us \[
	\lambda^{2i}\length(\gamma)\leq	\length(f^{2i}(\gamma))\leq2^{i}\alpha_{u}^{-1}\diam R.
	\]Consequently, \[
	\lambda\leq	\sqrt{2}\left[(\length(\gamma)\alpha_{u})^{-1}\diam R\right]^{1/2i},
	\]which contradicts the condition \ref{L3}.
	\end{proof}
	Following same line of reasoning as in \cite{Lozi-likemaps} or \cite{strange-attractor-mis} we prove the following.
	\begin{cor}\label{corollary:stmani-dense-in-R}
		The stable manifold $\StableMan{X}$ is dense in $R$. Moreover, $\StableMan{X}$ intersects every arc of unstable manifold in $R$ and $\lozil|_{\cl\UnstableMan{X}}$ is mixing.
	\end{cor}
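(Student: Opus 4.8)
The plan is to establish the three assertions separately, reducing the first two directly to Lemma \ref{proposition:st-mani-X-intersects-arcs} and concentrating the genuine work on the mixing statement.

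First I would record that any arc of an unstable manifold lying in $R$ is a $u$-arc: by conditions (C2)--(C3) the unstable cone field $\cc{C}^{u}$ is $df$-invariant and expanding, so the tangent direction to any unstable manifold lies in $K^{u}_{z}\subset K^{u}$, and the arc satisfies $\gamma'\subset\cc{C}^{u}$. Hence Lemma \ref{proposition:st-mani-X-intersects-arcs} applies verbatim and shows that $\StableMan{X}$ meets every arc of an unstable manifold in $R$; in particular the homoclinic points are dense on $\UnstableMan{X}$. For density of $\StableMan{X}$ itself, take any nonempty open $V\subset R$ and a point $z\in V$. By Lemma \ref{lemma:cone->gamma-a-function} a $u$-curve through $z$ is a graph over the $x$-axis with Lipschitz constant $\sqrt{1-\alpha_{u}^{2}}/\alpha_{u}$, so a sufficiently short such graph through $z$ is a $u$-arc contained in $V$; Lemma \ref{proposition:st-mani-X-intersects-arcs} then gives $\StableMan{X}\cap V\neq\emptyset$, proving density of $\StableMan{X}$ in $R$.

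For the mixing of $\lozil|_{\cl\UnstableMan{X}}$, set $\cc{A}:=\cl\UnstableMan{X}$, which is $\lozil$-invariant since $X$ is fixed and $\lozil(\UnstableMan{X})=\UnstableMan{X}$. Let $W,V$ be nonempty relatively open subsets of $\cc{A}$. As $\UnstableMan{X}$ is dense in $\cc{A}$, the set $W$ contains a $u$-arc $\gamma\subset\UnstableMan{X}$, while $V$ contains a point $p\in\UnstableMan{X}$ together with a plane-neighborhood $B$ of $p$; it suffices to show $\lozil^{n}(\gamma)\cap B\neq\emptyset$ for all but finitely many $n$. The driving mechanism is exponential growth under confinement: by (C3) we have $\length(\lozil^{n}(\gamma))\geq\lambda^{n}\length(\gamma)$, whereas each smooth $u$-subarc of $\lozil^{n}(\gamma)$ has length at most $\alpha_{u}^{-1}\diam R$ by Lemma \ref{lemma: smooth-arcs-bounded}. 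Since $\lambda>\sqrt{2}$ by \ref{L3}, the number of smooth pieces, produced by successive folds of $\lozil^{n}(\gamma)$ along the divider $\Rc$, grows without bound.

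The core of the argument, following \cite{Lozi-likemaps,strange-attractor-mis}, is to upgrade this growth to eventual density: I would show that for every $\eta>0$ there is an $N$ with $\lozil^{n}(\gamma)$ being $\eta$-dense in $\cc{A}$ for all $n\geq N$. The mechanism couples the folding above with the density of $\StableMan{X}$, which cuts every $u$-arc by the intersection property just established; this forces the folds of $\lozil^{n}(\gamma)$ to distribute across the strips $C_{m}$ of the partition of Proposition \ref{prop:renormalization-main} and to accumulate, as $n$ grows, on $\UnstableMan{X}$. Once an image subarc crosses a given strip fully in the unstable direction, its further images again contain full crossings, since $\lozil$ stretches $u$-arcs by at least $\lambda$ and confines each piece; this monotone persistence is exactly what yields density for all large $n$ rather than merely along a subsequence. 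Choosing $\eta$ below the radius of $B$ then gives $\lozil^{n}(W)\cap V\neq\emptyset$ for all large $n$, i.e. mixing. The main obstacle is precisely this last step: passing from transitivity (some iterate) to mixing (almost every iterate) requires controlling the geometry of the accumulating folds \emph{uniformly} in $n$, and it is here that the estimate $\lambda>\sqrt{2}$ together with the return and ordering structure of Proposition \ref{prop:renormalization-main} must be used with care.
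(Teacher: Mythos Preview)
The paper offers no proof of this corollary; it simply records that the argument is the one in \cite{Lozi-likemaps} and \cite{strange-attractor-mis}. Your treatment of the first two assertions --- density of $\StableMan{X}$ in $R$ via a short $u$-arc in any open set, and intersection with every unstable arc because unstable manifolds are $u$-curves --- is exactly that standard reduction to Lemma~\ref{proposition:st-mani-X-intersects-arcs} and matches the cited sources.

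Your mixing sketch, however, departs from those references in one respect worth flagging. The proofs in \cite{strange-attractor-mis} and \cite{Lozi-likemaps} do not invoke the partition $\{C_{m}\}$ of Proposition~\ref{prop:renormalization-main}; they argue directly with the rectangle and $\StableLocMan{X}$. The persistence step there reads: once $\lozil^{n}(\gamma)$ contains a $u$-subarc joining the two vertical faces of the rectangle (equivalently, crossing $\StableLocMan{X}\cap R$), so does $\lozil^{n+1}(\gamma)$ --- this is immediate from~\ref{R4}, since the image of a spanning $u$-arc is a broken arc each of whose two smooth pieces again reaches $\StableLocMan{X}$. A spanning $u$-arc then meets every $s$-curve on $R$, in particular every backward image of a neighbourhood in $\cc{A}$, and mixing follows. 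Your persistence idea is the right engine, but formulating it at the level of single strips $C_{m}$ is both unnecessary and fragile: a full $u$-crossing of one $C_{m}$ maps under $h_{m}$ into the $u$-strip $U_{m}$, and there is no a priori reason this image contains a full $u$-crossing of some other $C_{m'}$. The coarser, strip-free persistence is what the cited arguments use, and it sidesteps precisely the difficulty you yourself single out in your final paragraph.
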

	\section{Proof of Theorem \ref{theorem:attractor-as-homoclinic-class}}
	\subsection{The orientation preserving case}\label{section:existence-main-results}	
	The most non-trivial case to deal with is the orientation preserving case. In this section we assume that $\cc{F}=\{\lozil_{\mu}\}_{\mu\in M}$ is an orientation preserving Lozi-like family with a renormalization and a $\gamma$-tangency curve at $\mu_{0}\in M$. We will usually drop the dependence on $\mu$ and write simply $\lozil$. Let $A$ be the first turn of $\UnstableLocMan{Y}$ inside $R$. From the condition \ref{R4} it follows that $\UnstableLocMan{Y}$ intersects $\StableLocMan{X}$ at $D$ in the lower half plane. As $X$ lies in the upper half plane, and $\lozil$ reverses the orientation of $\StableLocMan{X}$, $E:=\lozil(D)\in \Ryp$ and points $E$, $D$, and $A$ forms a triangle $H_{0}$. To apply the renormalization model in the proof of Theorem \ref{theorem:attractor-as-homoclinic-class}, we consider first return map $\hat{h}\colon H_{0}\to H_{0}$, developing ideas in \cite{kucharski:strange-attractors}. Let $\hat{C}_{i}:=C_{i}\cap H_{0}$, $\hat{U}_{i}:=U_{i}\cap H_{0}$. One can find the least natural numbers $p=p(\mu)\in\N$ and $q=q(\mu)\in\N$ such that $\hat{U}_{p}\neq\emptyset$ and $\hat{U}_{q}\cap \cc{A}\neq\emptyset$, of course $q\leq p$. Put $H=\bigcup _{i\in\N}\lozil^{i}(H_{0})$. Then $H$ is a finite union, comprising of all Kakutani-Rokhlin towers for $h$. Note that if $i< p$, the set $\hat{C}_{i}$ is a $(u,s)$-rectangle, while the set $\hat{C}_{p}$ is a triangle. Let us introduce the notation for boundaries as follows. The left, right, upper and lower faces of $\hat{C}_{i}$ will be denoted, respectively, by $\fr\hat{C}^{l}_{i}$, $\fr\hat{C}^{r}_{i}$, $\fr\hat{C}^{+}_{i}$ and $\fr\hat{C}^{-}_{i}$. Moreover, it follows directly from the construction that $\hat{U}_{i}$ and $\hat{C}_{i}$, for $i<p$, inherit from the family $\{U_{i}\}_{i\geq 2}$ analogous properties as those in Proposition \ref{prop:renormalization-main} with $\epsilon=1$. Following convention that sets intersected with $H_{0}$ are written with hat, we can restate Proposition \ref{prop:renormalization-main} in the case of the first return map $\hat{h}\colon H_{0}\to H_{0}$. While we leave to the reader to find full statement of the proposition, let us write down the most important part. Namely, the order $\lhd_{u}$ is linear on $\{\hat{U}_{i}\}_{2\leq i \leq p}$, that is $\hat{U}_{i}\lhd_{u}\hat{U}_{i+1}$ for $2\leq i <p$. It remains to determine the behaviour of $\hat{h}_{p}$ in the presence of $\gamma$-tangency curve. We split discussion into two lemmas. The first one follows directly from the construction of renormalization model.
	\begin{lem}\label{lemma:perturbing-lozi-like}
		Let $\{\mu_{i}\}_{i\in\N}$ be a sequence of parameters with $\lim_{i\to\infty}\mu_{i}\in M_{0}$. Then $\diam\lozil(h\cap \Ryc)\to 0$ as $i\to\infty$.
	\end{lem}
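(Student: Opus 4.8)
The plan is to split the statement into two independent facts: that the arc $h\cap\Ryc$ collapses to a single point as the parameter tends to $M_{0}$, and that a uniform bound on the derivative of $\lozil$ then forces the diameter of its image to vanish. Recall that $\Ryc=\lozil(\Rc)$ is the fold $u$-curve, so $h\cap\Ryc$ is a sub-arc of $\Ryc$; its distinguished endpoint is the fold tip $A=u_{\mu}=\lozil(\UnstableLocMan{Y}\cap\Rc)$, and its other endpoint is the transverse crossing of $\Ryc$ with the unstable side $\segment{DE}$ of $H_{0}$, which exists because $D\in\Ryn$ and $E=\lozil(D)\in\Ryp$ lie on opposite sides of $\Ryc$. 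Since $\lozil$ expands $u$-curves by at least $\lambda$, one cannot control $\diam\lozil(h\cap\Ryc)$ through the expansion; the whole content is therefore to show that $h\cap\Ryc$ itself shrinks, and this is exactly where the passage to $M_{0}$ is used.

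First I would invoke continuity. By Definition \ref{definition:lozi-like-family} the maps $\lozil_{\mu}$, their invariant cone families and the divider $\Rc$ all depend continuously on $\mu$ up to the boundary $M_{0}$, so the fixed points $X$, $Y$, the local manifolds $\StableLocMan{X}$, $\UnstableLocMan{Y}$, the points $A,D,E$, and hence the arc $h\cap\Ryc$, vary continuously as $\mu_{i}\to\mu_{0}\in M_{0}$. Because $\cl R$ is compact and $(z,\mu)\mapsto d\lozil_{\mu,z}$ is continuous, the operator norms $\|d\lozil_{\mu_{i}}\|$ are uniformly bounded on $\cl R$ by a constant $C$ for all large $i$, so that $\diam\lozil(h\cap\Ryc)\le C\,\length(h\cap\Ryc)$. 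It then suffices to prove $\length(h\cap\Ryc)\to 0$.

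For the remaining step I would exploit the degeneration of the cone families at $M_{0}$. There the invariant cones collapse to fields of axes, which sharpens condition \ref{L1}: the stable direction is contracted totally, $|\det d\lozil_{\mu_{i}}|\to 0$ uniformly on $\cl R$, and $\lozil(R)$ flattens onto a single $u$-curve. Combined with the forming tangency this pinches the triangle $H_{0}$ given by \ref{R4}, driving its crossing point $P=\Ryc\cap\segment{DE}$ toward the tip $A$ and hence $\length(h\cap\Ryc)\to 0$; the arc length I would estimate not through the expansion factor but through the vanishing transverse size of $H_{0}$ controlled via condition (C3) and the bounded-length estimate of Lemma \ref{lemma: smooth-arcs-bounded}, after which the uniform bound $C$ closes the argument. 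The main obstacle is precisely converting the qualitative collapse of the cone families into this quantitative shrinking of the fold arc: one must show that the contraction of $h\cap\Ryc$ genuinely dominates the expansion of $\lozil$ along $\Ryc$, i.e. that $P$ approaches $A$ fast enough that $C\,\length(h\cap\Ryc)\to0$ even though $\lozil$ stretches $\Ryc$ by at least $\lambda>\sqrt 2$ (condition \ref{L3}).
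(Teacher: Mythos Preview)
Your overall shape is right—reduce to showing the arc $H_{0}\cap\Ryc$ collapses and then invoke a uniform derivative bound—but the mechanism you propose for the collapse is not valid. The degeneration of the cone families on $M_{0}$ means the cones shrink to their \emph{axes}; it does not force $|\det d\lozil_{\mu}|\to 0$ or total contraction in the stable direction. Condition (C3) only gives stable contraction by a factor $\le\lambda^{-1}$, and $\lambda$ merely stays above $\sqrt{2}$ along the sequence. So $\lozil(R)$ need not ``flatten onto a single $u$-curve'', and neither Lemma~\ref{lemma: smooth-arcs-bounded} (whose constant $\alpha_{u}^{-1}$ tends to $1$, not $0$) nor (C3) yields the pinching you want. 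The last paragraph of your proposal correctly identifies this as the crux but does not supply a working argument.

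The paper closes the gap with a different idea. At $\mu_{0}\in M_{0}$ the degenerate unstable cone field is a genuine line field, so its integral curves form an unstable \emph{foliation} of $\plane$; since $\lozil$ sends $u$-curves to $u$-curves it permutes the leaves, and the leaf $L_{Y}$ through the fixed point $Y$ is therefore invariant. This forces $\UnstableLocMan{Y}$ to lie on that single leaf, and the paper argues that consequently $\UnstableMan{Y}$ coincides with (hence, for nearby $\mu$, is Hausdorff–close to) $\Ryc$. From this, $D=\UnstableLocMan{Y}\cap\StableLocMan{X}$ and $E=\lozil(D)$ both converge to $X$, so the whole triangle $H_{0}$ collapses into an arbitrarily small neighbourhood of $\Ryc$; since $H$ is a finite union of iterates of $H_{0}$, the intersection with $\Ryc$ and its $\lozil$-image have diameter tending to $0$. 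The point you are missing is this foliation/invariant-leaf argument: it is the rigidity of a one-dimensional leaf space, not any determinant estimate, that drives the collapse.
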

	\begin{proof}
		Let us consider parameters $\mu\in M_{0}$. Note that fields of axis define integral curves, which in turn give rise to the unstable foliation of $\plane$. Let $L_{Y}$ be the leaf going through fixed point $Y$. Then $L_{Y}$ must be invariant and, since $L_{Y}=\StableMan{Y}$, $Y\in\Ryc$. Therefore, the unstable manifold $\UnstableMan{Y}\subset\Ryc$. It implies that $\UnstableMan{Y}$ is close to $\Ryc$ in the Hausdorff metric for $\mu$ close to $M_{0}$. Consequently, $E$ and $D$ are close to $X$, and so $H_{0}$ is contained in some small neighbourhood of $\Ryc$. As $H$ is a finite union of iterates of $H_{0}$, $\diam\lozil(h\cap \Ryc)\to 0$ as $i\to\infty$.
	\end{proof}
	A direct consequence of the above lemma and Definition \ref{definition:gamma-tangency-curve} is the following.
	\begin{lem}\label{lem:prelim-tangency-param}
		One can find an open set $\cc{U}_{\text{tan}}$ with $\mu_{0}\in \fr \cc{U}_{\text{tan}}$ such that if $\mu\in \cc{U}_{\text{tan}}$ then either
		\begin{enumerate}[label=(T\arabic*)]
			\item\label{tangency-c1} $q(\mu)=p(\mu)=n$ and $\hat{h}(H_{0})\cap\Ryc\subset\hat{C}^{r}_{p}$, or
			\item\label{tangency-c2} $q(\mu)=p(\mu)=n+1$ and $\hat{h}(H_{0})\cap\Ryc\subset \hat{C}^{l}_{p}$.
		\end{enumerate}
		Moreover, we have $\hat{h}(H_{0}\cap\Ryc)\subset \hat{C}_{2}$.
	\end{lem}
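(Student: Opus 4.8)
The plan is to read the two alternatives off the position of the tip $A=u_{\mu}$ relative to the strips $\{C_{m}\}$, using the $\gamma$-tangency curve to fix that position, and to confine the fold-image $\hat{h}(H_{0})\cap\Ryc$ to a single strip-half by Lemma \ref{lemma:perturbing-lozi-like}. The key preliminary observation is that $u_{\mu}=\lozil(\UnstableLocMan{Y}\cap\Rc)$ already lies on $\Ryc=\lozil(\Rc)$, so locating $u_{\mu}$ among the strips simultaneously locates $\hat{h}(H_{0})\cap\Ryc$.

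First I would fix the integer supplied by Definition \ref{definition:gamma-tangency-curve}: let $n\geq 2$ satisfy $u_{\kappa(t)}\in\gamma_{n}$ for all $t\in[0,1)$, where $\kappa$ is the $\gamma$-tangency curve with $\lim_{t\to 1}\kappa(t)=\mu_{0}$. Since $\cc{F}$, its cone families and its divider depend continuously on $\mu$, both $\mu\mapsto u_{\mu}$ and the curves $\gamma_{m}=\Pi_{+}(\beta_{m})$ vary continuously, so there is a neighbourhood $V$ of $\mu_{0}$ on which $u_{\mu}\in C_{n}\cup\gamma_{n}\cup C_{n+1}$ and stays close to $\gamma_{n}$. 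Because $u_{\mu}$ is the tip of the triangle $H_{0}$, the triangular strip $\hat{C}_{p}=C_{p}\cap H_{0}$ (the last non-empty one, the remaining $\hat{C}_{i}$ being genuine rectangles) is exactly the one containing $u_{\mu}$. Hence $V\setminus\kappa$ splits into the two open regions $\{u_{\mu}\in\inter C_{n}\}$, where $p(\mu)=n$, and $\{u_{\mu}\in\inter C_{n+1}\}$, where $p(\mu)=n+1$. I would let $\cc{U}_{\text{tan}}$ be their union inside $V$; since $\kappa$ separates them and limits to $\mu_{0}\in M_{0}$ while both regions lie in $M_{1}$, we get $\mu_{0}\in\fr\cc{U}_{\text{tan}}$.

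Next I would invoke Lemma \ref{lemma:perturbing-lozi-like}: as $\mu\to\mu_{0}\in M_{0}$ one has $\diam\lozil(H\cap\Ryc)\to 0$, so after shrinking $V$ the fold-images $\hat{h}(H_{0})\cap\Ryc$ and $\hat{h}(H_{0}\cap\Ryc)$ have diameter below the minimal widths of the strips near the tip and near $\gamma_{1}$, hence each lies in a single strip. The first is concentrated near $u_{\mu}$, so it lies in $\hat{C}_{p}$; the curve $S_{p}\subset\inter C_{p}$, the divider pull-back with $h(S_{p})=T_{p}\subset\Ryc$ from \ref{renorm:prop5}, decides the half. When $u_{\mu}\in\inter C_{n}$ the tip is near the right edge $\gamma_{n}$ of $C_{n}$, hence to the right of $S_{n}$, giving $\hat{h}(H_{0})\cap\Ryc\subset\hat{C}^{r}_{p}$ and case \ref{tangency-c1}; when $u_{\mu}\in\inter C_{n+1}$ it is near the left edge $\gamma_{n}$ of $C_{n+1}$, hence to the left of $S_{n+1}$, giving $\hat{C}^{l}_{p}$ and case \ref{tangency-c2}. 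The second image $\hat{h}(H_{0}\cap\Ryc)$ is confined to the leftmost strip $\hat{C}_{2}$ adjacent to $\gamma_{1}$, by the same collapsing together with the location of the limiting image (where $E$ and $D$ tend to $X$, as in the proof of Lemma \ref{lemma:perturbing-lozi-like}), yielding the ``moreover'' clause. Finally, $q(\mu)=p(\mu)$ because that collapsing also confines $\cc{A}$ near the tip $u_{\mu}\in\hat{C}_{p}$, so the least index $q$ with $\hat{U}_{q}\cap\cc{A}\neq\emptyset$ coincides with $p$; density of $\StableMan{X}$ from Corollary \ref{corollary:stmani-dense-in-R} guarantees that $\cc{A}$ does meet the tip strip rather than stopping short.

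The step I expect to be the main obstacle is this last paragraph: matching the side ($l$ versus $r$) of the collapsing fold-image to the side on which $u_{\mu}$ falls, and pinning down $q=p$. The side-matching needs the orientation bookkeeping inherited from Proposition \ref{prop:renormalization-main} with $\epsilon=1$, so that $S_{p}$ genuinely separates $\hat{C}_{p}$ with the tip on the asserted side, and it must be verified that $\hat{h}(H_{0})\cap\Ryc$ collapses onto $u_{\mu}$ rather than onto some other crossing of $\Ryc$. The equality $q=p$ requires that the attractor, although shrinking toward $\Ryc$, still penetrates the triangular strip, which is precisely where the placement of $u_{\mu}$ across $\gamma_{n}$ furnished by the $\gamma$-tangency hypothesis is indispensable.
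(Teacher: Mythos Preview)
Your proposal is correct and follows exactly the route the paper indicates: the paper itself gives no argument beyond the single sentence ``A direct consequence of the above lemma and Definition \ref{definition:gamma-tangency-curve},'' and your plan unpacks precisely that, reading off $p(\mu)\in\{n,n+1\}$ from the position of $u_{\mu}$ relative to $\gamma_{n}$ and then invoking Lemma \ref{lemma:perturbing-lozi-like} to confine the fold-images. Your worries in the last paragraph about side-matching and $q=p$ are legitimate points of detail, but the paper does not address them either; you have supplied more justification than the original.
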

	\begin{lem}\label{lem:behaviour-hp}
		Let $\cc{F}=\{\lozil_{\mu}\}_{\mu\in M}$ be a Lozi-like family with renormalization and a $\gamma$-tangency curve at $\mu_{0}\in M$. Then for parameters in $\cc{U}_{\text{tan}}$ either $\hat{U}_{p}\subset \hat{C}_{2}$ or the following holds
		\begin{enumerate}[label=(P\arabic*)]
			\item\label{Cp->Up(1)} there is a curve $\hat{R}_{p}\subset \inter \hat{C}_{p}$ with endpoints on $\fr \hat{C}_{p}^{u,-}$ and $\fr \hat{C}_{p}^{u,+}$ for which $h_{p}$ is affine on components $\hat{C}_{p}^{l}$ and $\hat{C}_{p}^{r}$ of $\hat{C}_{p}\setminus R_{p}$. Moreover, $\hat{h}(\hat{R}_{p}):=\hat{T}_{p}\subset \{y=0\}$,
			\item\label{Cp->Up(2)} $\hat{U}_{p}$ is a broken triangle, positioned to the right of $\hat{U}_{n}$, for $n=2,...,p-1$, and comprising of a rectangle $\hat{U}_{p}^{-}:=\hat{h}(\hat{C}_{p}^{r})$ and a triangle $\hat{U}_{p}^{+}:=\hat{h}(\hat{C}_{p}^{l})$, positioned respectively in the lower and upper half plane
			\item\label{Cp->Up(3)}  the boundary $\fr \hat{U}_{p}$ comprises of the curve $\hat{h}(\fr \hat{C}^{s,l}_{p})\subset \StableMan{X}$ and broken curves $\fr \hat{U}_{p}^{u,l}=\hat{h}(\fr \hat{C}_{p}^{u,-})$ and $\fr \hat{U}_{p}^{u,r}=\hat{h}(\fr \hat{C}_{p}^{u,+})$ with the former lying to the left of the latter,
			\item\label{Cp->Up(4)} the triangle $\hat{U}_{p}^{+}$ connects $\hat{C}_{p}$ with $\hat{C}_{2}$ and $\hat{h}(A)\in \hat{C}_{2}$.
		\end{enumerate}
	\end{lem}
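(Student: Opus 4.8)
The plan is to analyze the behavior of the first return map $\hat h$ on the last cell $\hat C_p$ by transporting the general structure of Proposition \ref{prop:renormalization-main} to the triangle $H_0$ and then invoking the dichotomy supplied by Lemma \ref{lem:prelim-tangency-param}. First I would dispose of the trivial alternative: if the first turn $u_\mu$ lands so that $\hat U_p\subset\hat C_2$, there is nothing to prove, so throughout I assume we are in the complementary situation. Since we are in the orientation preserving case, $\epsilon=1$, and by the discussion preceding the lemma the hatted sets $\hat C_i$, $\hat U_i$ for $i<p$ inherit exactly the properties of Proposition \ref{prop:renormalization-main} with $\epsilon=1$; the only cell requiring new attention is $\hat C_p$, which is a \emph{triangle} rather than a $(u,s)$-rectangle because $H_0$ is bounded by the arc through $A$, $D$, $E$ rather than by a full face of $R$.

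For \ref{Cp->Up(1)}, I would argue that the dividing $s$-curve $S_p\subset C_p$ supplied by Proposition \ref{prop:renormalization-main}\ref{renorm:prop5}, intersected with $H_0$, gives the desired curve $\hat R_p$: its endpoints lie on $\fr\hat C_p^{u,-}$ and $\fr\hat C_p^{u,+}$ (arcs of $\UnstableMan{Y}$ by construction), and on the two components $\hat C_p^l$, $\hat C_p^r$ of $\hat C_p\setminus\hat R_p$ the return map agrees with a single branch $\lozil_\sigma^{\,p}$, hence is $C^1$; since $h_p=\lozil^p|_{C_p}$ and each $\lozil_\sigma$ is affine on its half-plane in the relevant subfamily, $h_p$ is affine on each component. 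That $\hat h(\hat R_p)=\hat T_p\subset\{y=0\}=\Ryc$ is precisely the image condition from \ref{renorm:prop5} ($h(S_n)\subset\Ryc$), restricted to $H_0$. Claims \ref{Cp->Up(2)}, \ref{Cp->Up(3)}, \ref{Cp->Up(4)} then follow by combining this with the orientation data. The images $\hat U_p^\pm=\hat h(\hat C_p^{l})$, $\hat h(\hat C_p^{r})$ sit on opposite sides of $\hat T_p\subset\Ryc$, i.e.\ in the upper and lower half planes respectively, giving the broken-triangle structure of \ref{Cp->Up(2)}; the positioning to the right of $\hat U_n$ for $n<p$ comes from linearity of $\lhd_u$ on $\{\hat U_i\}_{2\le i\le p}$ noted before the lemma. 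The boundary description \ref{Cp->Up(3)} reads off $\fr\hat U_p^{u,l}=\hat h(\fr\hat C_p^{u,-})$, $\fr\hat U_p^{u,r}=\hat h(\fr\hat C_p^{u,+})$ and their left--right order from Proposition \ref{prop:renormalization-main}\ref{renorm:prop10} with $\epsilon=1$, together with the fact that $\UnstableMan{Y}$ maps into $\StableMan{X}$-bounded pieces.

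The heart of the matter, and the step I expect to be the main obstacle, is \ref{Cp->Up(4)}: pinning down that the \emph{triangular} piece $\hat U_p^+$ reaches across to $\hat C_2$ and that the apex satisfies $\hat h(A)\in\hat C_2$. Here the generic renormalization picture is not enough, because which cell the folded tip lands in is governed by the position of the first turn relative to the curves $\gamma_i$, i.e.\ by the $\gamma$-tangency hypothesis. The plan is to use Lemma \ref{lem:prelim-tangency-param}: in either alternative \ref{tangency-c1} or \ref{tangency-c2} we have $q(\mu)=p(\mu)$, $\hat h(H_0\cap\Ryc)\subset\hat C_2$, and $\hat h(H_0)\cap\Ryc$ confined to one side ($\hat C_p^r$ or $\hat C_p^l$) of $\hat R_p$. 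Since $A$ is the first turn of $\UnstableLocMan Y$ and $\hat h(\hat R_p)=\hat T_p$ lies on $\Ryc$, the tip image $\hat h(A)$ is forced into the same cell as $\hat h(H_0\cap\Ryc)$, namely $\hat C_2$; the triangle $\hat U_p^+$, having one vertex at $\hat h(A)$ and a face on $\hat T_p\subset\Ryc$, must then stretch from $\hat C_p$ across to $\hat C_2$. I would make this precise by tracking the continuity of $u_\mu$ and of the return time along the parameter curve $\kappa$ of Definition \ref{definition:gamma-tangency-curve} as $\mu\to\mu_0\in M_0$, using Lemma \ref{lemma:perturbing-lozi-like} to ensure $H_0$ shrinks toward $\Ryc$ so that the tip's target cell is determined purely by the tangency condition $u_{\kappa(t)}\in\gamma_i$; the delicate point is checking that the two boundary alternatives of Lemma \ref{lem:prelim-tangency-param} are exhaustive for $\mu\in\cc U_{\mathrm{tan}}$, which I would verify by a continuity/intermediate-value argument on the signed distance from $u_\mu$ to the relevant $\gamma_i$.
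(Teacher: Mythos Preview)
Your approach is essentially the paper's, but you blur the correspondence between the dichotomy in Lemma \ref{lem:prelim-tangency-param} and the dichotomy in the statement of the present lemma, and this makes your treatment of \ref{Cp->Up(4)} more complicated than it needs to be. In the paper the proof is organised by cases of Lemma \ref{lem:prelim-tangency-param}: \ref{Cp->Up(1)} comes straight from the renormalization model (as you say); then case \ref{tangency-c1} yields the first alternative $\hat U_p\subset\hat C_2$ directly from $\hat h(H_0\cap\Ryc)\subset\hat C_2$, while case \ref{tangency-c2} yields \ref{Cp->Up(2)}--\ref{Cp->Up(4)}, with \ref{Cp->Up(2)} from Proposition \ref{prop:renormalization-main} plus orientation preservation, \ref{Cp->Up(3)} by following the oriented boundary curve $\fr\hat C_p$ under the orientation-preserving map, and \ref{Cp->Up(4)} \emph{immediately} from \ref{tangency-c2} together with the ``moreover'' clause $\hat h(H_0\cap\Ryc)\subset\hat C_2$. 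You instead set up the dichotomy as ``$\hat U_p\subset\hat C_2$ or not'' without tying it to \ref{tangency-c1}/\ref{tangency-c2}, and then argue \ref{Cp->Up(4)} as if it had to be extracted in both alternatives; your final paragraph about re-verifying exhaustiveness of \ref{tangency-c1}/\ref{tangency-c2} via a continuity/intermediate-value argument on the signed distance of $u_\mu$ to $\gamma_i$ is entirely unnecessary, since that is precisely what Lemma \ref{lem:prelim-tangency-param} already packages. Once you align the two alternatives of the lemma with \ref{tangency-c1} and \ref{tangency-c2} respectively, the proof collapses to a few lines.
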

	\begin{proof}
		Let us first note that \ref{Cp->Up(1)} follows simply from the renormalization model. Moreover, if $\mu\in \cc{U}_{\text{tan}}$ and \ref{tangency-c1} occurs, then $\hat{U}_{p}\subset \hat{C}_{2}$ follows from $\hat{h}(H_{0}\cap\Ryc)\subset \hat{C}_{2}$. Let us now assume that \ref{tangency-c2} occurs. Note that \ref{Cp->Up(2)} is always satisfied whenever \ref{tangency-c2} and the moreover part of Lemma \ref{lem:prelim-tangency-param} holds, it is a consequence of Proposition \ref{prop:renormalization-main} and the fact that $\lozil$ preserves orientation. On the other hand, to prove \ref{Cp->Up(3)} we simply consider the boundary $\fr \hat{C}_{p}$ as an oriented curve and use the fact that $\lozil$ preserves its orientation. Hence, positions of curves of $\lozil(\fr\hat{C}_{p})=\fr\hat{U}_{p}$ must be as stated in \ref{Cp->Up(3)}. At last, \ref{Cp->Up(4)} follows directly from \ref{tangency-c2} and the moreover part of Lemma \ref{lem:prelim-tangency-param}.
	\end{proof}

	\begin{prop}
		For parameters in $\cc{U}_{\text{tan}}$ we have $\bigcap_{i\in\N}\hat{h}^{i}(H_{0})=H_{0}\cap\cl \UnstableMan{X}$.
	\end{prop}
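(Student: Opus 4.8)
Write $\Lambda:=\bigcap_{i\in\N}\hat{h}^{i}(H_{0})$ for the maximal invariant set in question. The plan is to establish the two inclusions $H_{0}\cap\cl\UnstableMan{X}\subseteq\Lambda$ and $\Lambda\subseteq H_{0}\cap\cl\UnstableMan{X}$ separately, the second being the substantial one.

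For the inclusion $H_{0}\cap\cl\UnstableMan{X}\subseteq\Lambda$ I would first note that $\UnstableMan{X}$ is $\lozil$-invariant and, lying in the attractor, never leaves the trapping region. Writing $\UnstableMan{X}=\bigcup_{n}\lozil^{n}(\UnstableLocMan{X})$ and using that the first return map $\hat{h}$ realises these forward iterates inside $H_{0}$, every arc of $\UnstableMan{X}\cap H_{0}$ is the $\hat{h}$-image of an arc lying closer to $X$; hence $\UnstableMan{X}\cap H_{0}\subseteq\hat{h}^{i}(H_{0})$ for all $i$. Since each $\hat{h}^{i}(H_{0})$ is compact, being a finite union of the rectangles and broken triangles supplied by Proposition \ref{prop:renormalization-main} and Lemma \ref{lem:behaviour-hp}, taking closures yields the inclusion.

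The reverse inclusion is where the hyperbolic estimates enter. By the $H_{0}$-analogue of Proposition \ref{prop:renormalization-main}\ref{renorm:prop1} each connected component of $\hat{h}^{i}(H_{0})$ is a $(u,s)$-rectangle (the index-$p$ piece, a broken triangle, being treated separately below) whose two vertical faces lie on $\StableMan{X}$ and whose two horizontal faces lie on the unstable manifold. Such a component is $\lozil^{N}$ of a subrectangle of $H_{0}$, with total return time $N\ge i$. Forward iteration contracts lengths along $\StableMan{X}$: the stable cone estimate $\|d\lozil^{-1}(w)\|\ge\lambda\|w\|$ for $w\in K^{s}$ (condition (C3)) gives a factor at least $\lambda^{-1}$ per step, so the vertical faces, and therefore the whole stable thickness of the component, are bounded by $\lambda^{-N}$ times the length of an $s$-curve across $R$, which tends to $0$, while Lemma \ref{lemma: smooth-arcs-bounded} keeps the horizontal faces of length at most $\alpha_{u}^{-1}\diam R$. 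Thus any $z\in\Lambda$ lies, for every $i$, within stable-distance $O(\lambda^{-i})\to0$ of an arc of unstable manifold bounding its component.

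It remains to identify these limiting boundary arcs with $\cl\UnstableMan{X}$ and to dispose of the fold, the latter being the main obstacle. A priori the horizontal faces are arcs of $\UnstableMan{Y}$ (Proposition \ref{prop:renormalization-main}\ref{renorm:prop1}); to pass to $\UnstableMan{X}$ I would invoke the transverse heteroclinic intersection $D\in\UnstableMan{Y}\cap\StableMan{X}$, so that by a standard inclination-lemma argument the forward iterates of $\UnstableMan{Y}$ accumulate on $\UnstableMan{X}$ and these arcs lie in $\cl\UnstableMan{X}$; Corollary \ref{corollary:stmani-dense-in-R} pins down $\cl\UnstableMan{X}$ as the relevant closed invariant set. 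Letting $i\to\infty$ then places $z$ in $\cl\UnstableMan{X}$, giving $\Lambda\subseteq H_{0}\cap\cl\UnstableMan{X}$. The delicate point is the index-$p$ branch: the $\gamma$-tangency degenerates $\hat{C}_{p}$ to a triangle and $\hat{U}_{p}$ to a broken triangle, so one must check that the shrinking argument survives the fold. Here I would use Lemmas \ref{lem:prelim-tangency-param} and \ref{lem:behaviour-hp}: for $\mu\in\cc{U}_{\text{tan}}$ the cases \ref{tangency-c1}, \ref{tangency-c2} force $\hat{U}_{p}$ back inside $\hat{C}_{2}$ or into the controlled configuration \ref{Cp->Up(1)}--\ref{Cp->Up(4)}, so the tower is self-consistent, the fold locus sits on a turn of the unstable manifold, and the degenerate branch neither produces a component of non-shrinking stable width nor contributes anything outside $\cl\UnstableMan{X}$.
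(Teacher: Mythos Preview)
Your shrinking estimate is fine --- every $s$-curve in $\hat{h}^{i}(H_{0})$ has length at most $\lambda^{-N}\diam H_{0}$ with $N\ge i$, so each point of $\Lambda$ lies arbitrarily close to a horizontal boundary arc of its component. The gap is in identifying those boundary arcs with $\cl\UnstableMan{X}$. The horizontal faces of the $\hat{C}_{j}$ lie on $\UnstableMan{Y}$, not on $\UnstableMan{X}$, and your inclination-lemma step goes the wrong way: from $D\in\UnstableMan{Y}\cap\StableMan{X}$ the $\lambda$-lemma yields $\UnstableMan{X}\subset\cl\UnstableMan{Y}$, i.e.\ that high forward iterates of a disc through $D$ accumulate on $\UnstableLocMan{X}$. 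It does \emph{not} say that an arbitrary arc of $\UnstableMan{Y}$ --- in particular the specific arc bounding the component containing your point $z$ --- lies in $\cl\UnstableMan{X}$. Only the portion of $\lozil^{N}(\seg{DA})$ near $\lozil^{N}(D)\to X$ is forced close to $\UnstableMan{X}$; the rest of that long folded curve need not be. So the chain ``$z$ is $O(\lambda^{-i})$-close to an arc of $\UnstableMan{Y}$, which is in $\cl\UnstableMan{X}$'' breaks at the second step, and without it you have only shown $\Lambda\subset H_{0}\cap\cl\UnstableMan{Y}$.

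The paper circumvents this by never relying on $\UnstableMan{Y}$ at all. Since $X$ lies on the side $\seg{ED}\subset\StableLocMan{X}$ of the triangle $H_{0}$, a branch of $\UnstableLocMan{X}$ enters $H_{0}$, and the tangency hypothesis (Lemma~\ref{lem:prelim-tangency-param}) guarantees that a curve $\gamma\subset\UnstableMan{X}\cap\hat{C}_{p-1}$ crosses $\hat{C}_{p-1}$ fully. One then builds explicit regions $W$ with $\fr W\subset\UnstableMan{X}\cup\StableMan{X}$, shows $\hat{h}(\hat{C}_{j})\subset W$ for the relevant $j$, and uses the elementary fact that such $W$ satisfy $\omega(W)\subset\cl\UnstableMan{X}$. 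The fold is handled by a separate induction (depending on which alternative of Lemma~\ref{lem:behaviour-hp} occurs) showing that itineraries confined to $\hat{C}_{p-1}$ or $\hat{C}_{p}$ produce, at every finite stage, unions of rectangles whose opposite $s$-faces are connected by an arc of $\UnstableMan{X}$; area contraction then finishes. Your one-sentence dismissal of the index-$p$ branch hides exactly this induction, which is where the tangency assumption \ref{tangency-c1}/\ref{tangency-c2} is genuinely used.
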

	\begin{proof}
		Note that to prove the proposition it is enough to show $\cc{R} (\bar{i},\bar{t}):=\bigcap_{k\in\N}\hat{h}^{i_{k}}(\hat{C}_{t_{k}})\subset \cl \UnstableMan{X}$, where $\bar{i}=\{i_{k}\}_{k\in\N}$ is any increasing sequence with $i_{0}=0$ and $\bar{t}=\{t_{k}\}_{k\in\N}$ satisfies $t_{k}\in\{1,..,p\}$. For later use, we define $\cc{R} (\bar{i},\bar{t},n):=\bigcap_{k\leq n}\hat{h}^{i_{k}}(\hat{C}_{t_{k}})$ for $n\in\N$.
		
		Moreover, it is visible that if $\hat{h}^{j}(\hat{C}_{t'})\subset W$ for some $W$ with $\omega(W)\subset \cl\UnstableMan{X}$, $j\in\N$ and $t'\in\{1,...p\}$, and if $t'=t_{k}$ for infinitely many $k\in\N$, we must have $\cc{R}(\bar{i},\bar{t})\subset \cl\UnstableMan{X}$. At last, note that any closed disc $W$ with $\fr W\subset \UnstableMan{X}\cup\StableMan{X}$, must also have $\omega(W)\subset\cl\UnstableMan{X}$.
		
		Note that by Lemma \ref{lem:prelim-tangency-param} one can find a curve $\gamma\subset \hat{C}_{p-1}\cap \UnstableMan{X}$ which connects $\fr \hat{C}^{u,l}_{p-1}$ and $\fr \hat{C}^{u,r}_{p-1}$. Thus, $\hat{h}(\gamma)$ connects $\hat{h}(\fr \hat{C}^{u,l}_{p-1})$ and $\hat{h}(\fr \hat{C}^{u,r}_{p-1})$, and by \ref{renorm:prop6} $\hat{h}(\gamma)$ is to the right of every $\hat{U}_{j}$, for $j=2,...,p-1$. Hence, $\hat{h}(R)\subset W$, where $W$ is bounded by $\seg{ED}\subset\StableMan{X}$ and $\gamma\subset \UnstableMan{X}$, and $R=H_{0}\setminus (\hat{C}_{p-1}\cup \hat{C}_{p})$. It proves $\cc{R}(\bar{i},\bar{t})\subset\cl\UnstableMan{X}$ in the case $t_{k}<p-1$ for $k\in\N$. 
		
		We split our discussion into two cases, according to conditions of Lemma \ref{lem:behaviour-hp}. Assume first, that $\hat{C}_{p}\subset \hat{U}_{2}$. Then, by our previous discussion, since $\hat{U}_{2}\subset R$, there must be $\cc{R}(\bar{i},\bar{t})\subset \cl\UnstableMan{X}$ if $t_{k}=p$ for infinitely many $k\in\N$. Consider now sets $\cc{R}(\bar{i},\bar{t})$ with $t_{k}= p-1$ for infinitely many $k\in\N$. It is enough to treat sets with $t_{k}= p-1$ for every $k\in\N$. We will show that any sequence $\bar{i}$ defines $\cc{R}(\bar{i},\bar{t},n)$, $n\geq 0$, which is a disjoint union of rectangles $T$ and every $T$ has its opposite sides connected by $\UnstableMan{X}$. As $\hat{h}$ is area contracting, it shows $\cc{R}(\bar{i},\bar{t})\subset\cl\UnstableMan{X}$. Using the last claim of Lemma \ref{lem:prelim-tangency-param} we see that $\cc{R}(\bar{i},\bar{t},1)$ is a disjoint union of rectangles $T$ with mentioned form. Note that the image $\hat{h}(T)$ of a rectangle $T$ is a union of two rectangles bordering on $\hat{T}_{p-1}$, both connecting $\hat{h}(\fr \hat{C}_{p-1}^{s,l})$ with $\hat{h}(\fr \hat{C}_{p-1}^{s,r})$. Moreover, $\hat{h}(\hat{T})\cap  \hat{T}_{p-1}\subset \hat{C}_{p}$. Consequently, $\hat{h}(\hat{T})\cap \hat{C}_{p-1}$ is a union of two rectangles with prescribed properties. Proceeding by induction, we see that one can apply the above reasoning for any $n\in\N$, proving our claim.
		
		We turn to the case, when properties \ref{Cp->Up(1)} to \ref{Cp->Up(4)} hold. Using Lemma \ref{lem:prelim-tangency-param} and property \ref{Cp->Up(4)} one can find $\tau\subset \hat{C}_{p}\cap\UnstableMan{X}$ with $\hat{h}(\tau)$ connecting $\seg{ED}$, $\hat{T}_{p}$ and $\fr \hat{C}_{2}^{s,r}$. Let $g\in \hat{h}(\tau)\cap \fr \hat{C}_{p-1}^{s,r}$. Let $S_{g}\subset \fr \hat{C}^{s,r}_{p-1}$ be the segment connecting $g$ and $\fr \hat{C}^{u,-}_{p-1}$. Consider the region $W'$ bounded by $\seg{ED}$, $\hat{h}(\tau)$, $S_{g}$ and $\bigcup_{j=2}^{p-1}\fr \hat{C}^{u,-}_{j}$. Note that $\omega(\fr W')\subset \cl\UnstableMan{X}$, since $\omega(R)\subset\cl\UnstableMan{X}$. It follows from \ref{Cp->Up(2)} and \ref{renorm:prop9} that $\hat{h}(\hat{C}_{j})\subset W'$ for every $j=2,...,p-1$. Hence, $\cc{R}(\bar{i},\bar{t})\subset\cl\UnstableMan{X}$ if only $t_{k}<p$ for infinitely many $k\in\N$. Therefore, it remains to consider the case of $t_{k}=p$ for every $k\in\N$. As before, it is not hard to prove, using induction, Lemma \ref{lem:behaviour-hp} and \ref{lem:prelim-tangency-param} that, for every $n\in\N$, $\cc{R}(\bar{i},\bar{t}, n)$ is a union of rectangles $T$ which have opposite sides connected by a curve in $\UnstableMan{X}$. Consequently, as $h$ contracts the area, $\cc{R}(\bar{i},\bar{t})\subset\cl\UnstableMan{X}$, proving the proposition.
	\end{proof}
	
	We have an immediate
	\begin{cor}\label{trapping-region}
		For parameters in $\cc{U}_{\text{tan}}$ we have $\bigcap_{n=0}^{\infty}f^{n}(H)=\cl\UnstableMan{X}$.
	\end{cor}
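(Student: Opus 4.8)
The plan is to deduce the Corollary from the preceding Proposition by exploiting the Kakutani--Rokhlin tower structure of $H$ over the first return map $\hat{h}\colon H_{0}\to H_{0}$. First I would record this structure explicitly: writing $\tau(w)$ for the first return time of $w\in H_{0}$ to $H_{0}$ under $\lozil$, the set $H=\bigcup_{i\in\N}\lozil^{i}(H_{0})$ decomposes as the finite-height tower whose levels are $\{\lozil^{i}(w)\colon 0\le i<\tau(w)\}$. The map $\lozil$ carries each level into the next, and the top levels return into the base $H_{0}\subset H$, so that $\lozil(H)\subset H$ and the images $\lozil^{n}(H)$ form a decreasing sequence. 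Since $\lozil$ is a homeomorphism, $z\in\bigcap_{n\in\N}\lozil^{n}(H)$ if and only if the entire backward orbit of $z$ stays in $H$; thus $\bigcap_{n}\lozil^{n}(H)$ is the maximal $\lozil$-invariant subset of $H$.

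For the inclusion $\cl\UnstableMan{X}\subset\bigcap_{n}\lozil^{n}(H)$ I would use that $\cl\UnstableMan{X}$ is $\lozil$-invariant and contained in $H$. Every point $p\in\cl\UnstableMan{X}$ then lies in some level $\lozil^{i}(H_{0})$, and its base point $\lozil^{-i}(p)$ lies in $H_{0}\cap\cl\UnstableMan{X}$ by invariance; hence $\cl\UnstableMan{X}=\bigcup_{i}\lozil^{i}(H_{0}\cap\cl\UnstableMan{X})=\bigcup_{i}\lozil^{i}\big(\bigcap_{k}\hat{h}^{k}(H_{0})\big)$ by the preceding Proposition. Any point of $\bigcap_{k}\hat{h}^{k}(H_{0})$ has a full backward $\hat{h}$-orbit in $H_{0}$, hence a full backward $\lozil$-orbit in $H$, so it lies in the invariant set $\bigcap_{n}\lozil^{n}(H)$; applying $\lozil^{i}$ and using invariance yields $\cl\UnstableMan{X}\subset\bigcap_{n}\lozil^{n}(H)$.

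For the reverse inclusion I would take $z\in\bigcap_{n}\lozil^{n}(H)$, so that the backward orbit of $z$ never leaves $H$. Because the tower has uniformly bounded height, this backward orbit meets the base $H_{0}$ infinitely often. Letting $w_{0}\in H_{0}$ be the base point below $z$, so that $z=\lozil^{j}(w_{0})$ with $0\le j<\tau(w_{0})$, the successive returns to $H_{0}$ assemble into a complete backward $\hat{h}$-orbit of $w_{0}$ inside $H_{0}$. Hence $w_{0}\in\bigcap_{k}\hat{h}^{k}(H_{0})=H_{0}\cap\cl\UnstableMan{X}$ by the preceding Proposition, and therefore $z=\lozil^{j}(w_{0})\in\cl\UnstableMan{X}$ by invariance of $\cl\UnstableMan{X}$. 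The two inclusions together give $\bigcap_{n}\lozil^{n}(H)=\cl\UnstableMan{X}$.

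The main obstacle is the bookkeeping that legitimizes the tower picture: one must verify that $H$ is genuinely a finite union of towers over $\hat{h}$ with uniformly bounded heights, so that both forward and, crucially, backward orbits remaining in $H$ must return to $H_{0}$ infinitely often, and that $\cl\UnstableMan{X}\subset H$. Both facts follow from the construction of $H_{0}$ as the triangle on $E$, $D$, $A$ together with the renormalization data of Proposition \ref{prop:renormalization-main}, but they are precisely what converts the statement about the return map $\hat{h}$ into the desired statement about $\lozil$.
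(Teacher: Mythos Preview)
Your plan is correct and is exactly the unwinding the paper has in mind when it declares the corollary ``immediate'': the preceding Proposition handles the base $H_{0}$ of the Kakutani--Rokhlin tower, and passing to $H=\bigcup_{i}\lozil^{i}(H_{0})$ is the standard tower bookkeeping you describe. The paper gives no further argument, so there is nothing to compare beyond noting that the obstacles you flag (finite tower height and $\cl\UnstableMan{X}\subset H$) are precisely the facts the paper takes for granted from the construction of $H_{0}$ and the forward invariance of $H$.
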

	As a consequence we obtain
	
	\begin{thm}
		For parameters in $\cc{U}_{\text{tan}}$ the unstable manifold $\cc{A}=\cl \UnstableMan{X}$ of $X$ is a topological attractor.
	\end{thm}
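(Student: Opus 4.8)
The plan is to verify directly the two defining conditions (A1) and (A2) of Definition \ref{def:attractor} for $\cc{A}=\cl\UnstableMan{X}$, feeding on the two facts already established: Corollary \ref{trapping-region}, which identifies $\cc{A}$ with the maximal invariant set $\bigcap_{n\geq 0}\lozil^{n}(H)$, and Corollary \ref{corollary:stmani-dense-in-R}, which asserts that $\lozil|_{\cc{A}}$ is mixing. First I would record that $\cc{A}$ is compact and invariant: it is closed and bounded as a decreasing intersection of iterates of the bounded set $H$, and $\lozil(\cl\UnstableMan{X})=\cl\UnstableMan{X}$ because $\UnstableMan{X}$ is invariant and $\lozil$ is a homeomorphism.

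For (A1) I would exhibit an open set inside the realm of attraction. Since $H$ is a (finite Kakutani--Rokhlin) trapping region, $\lozil(\cl H)\subset H$, so the forward orbit of any $x\in\inter H$ stays in $\cl H$ and hence $\omega(x)\subset\cl H$. As $\lozil$ is a homeomorphism, $\omega(x)$ is invariant, so $\omega(x)=\lozil^{n}(\omega(x))\subset\lozil^{n}(\cl H)$ for every $n$, whence $\omega(x)\subset\bigcap_{n\geq 0}\lozil^{n}(\cl H)=\cc{A}$ by Corollary \ref{trapping-region}. Thus $\inter H\subset\rho(\cc{A})$, and since $\inter H$ is a non-empty planar open set it has positive Lebesgue measure, giving (A1).

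For (A2) I would use transitivity. Because $\cc{A}=\cl\UnstableMan{X}$ is a union of non-degenerate arcs it has no isolated points, i.e.\ it is perfect; together with mixing (hence topological transitivity) of $\lozil|_{\cc{A}}$ from Corollary \ref{corollary:stmani-dense-in-R}, the Birkhoff transitivity theorem furnishes a point $z\in\cc{A}$ whose forward orbit is dense in $\cc{A}$. Deleting finitely many initial points from a dense subset of a perfect space leaves it dense, so $\omega(z)=\cc{A}$. Now let $A'\subset\cc{A}$ be closed with $A'\neq\cc{A}$. Then $z\in\rho(\cc{A})$ since $\omega(z)=\cc{A}\subset\cc{A}$, while $\omega(z)=\cc{A}\not\subset A'$ forces $z\notin\rho(A')$; hence $\rho(A')\neq\rho(\cc{A})$. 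Contrapositively, any closed $A'\subset\cc{A}$ with $\rho(A')=\rho(\cc{A})$ must equal $\cc{A}$, which is exactly (A2).

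The two corollaries do almost all of the work, so the main obstacle is essentially bookkeeping: confirming that $H$ genuinely functions as a trapping region with non-empty interior and that the intersection identity of Corollary \ref{trapping-region} survives replacing $H$ by $\cl H$ (so that $\omega$-limits of interior points land exactly in $\cc{A}$, and not in some larger set generated by the piecewise boundaries and the triangular corners of $H_{0}$). Indeed $\cc{A}\subset\cl H$ is invariant, so $\cc{A}\subset\lozil^{n}(\cl H)$ for all $n$, while $\lozil(\cl H)\subset H$ gives $\bigcap_{n}\lozil^{n}(\cl H)\subset\bigcap_{n}\lozil^{n-1}(H)=\cc{A}$, pinning down the required equality. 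Once that is in place (A1) is immediate, and the only conceptual point is the transitive-point argument for (A2), where perfectness of $\cc{A}$ is precisely what promotes a dense orbit to the equality $\omega(z)=\cc{A}$.
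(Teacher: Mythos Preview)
Your proof is correct as a verification of Milnor's conditions (A1) and (A2), but it proceeds along a genuinely different route from the paper. The paper does not check (A1) and (A2) at all; instead it builds an explicit open trapping region $V$ with $\cc{A}\subset V\subset H$ and $\lozil(\cl V)\subset V$. The construction is inductive: from $\cc{A}\subset\inter H$ one gets $\lozil^{p+1}(H)\subset\inter H$, chooses $V_{0}\subset\inter H$ containing $\lozil^{p+1}(H)$, and then thickens successively to $V_{1},\dots,V_{p}$ with $\lozil(\cl V_{n-1})\subset V_{n}\subset \lozil^{n}(\inter H)$; the union $V=\bigcup_{n=0}^{p}V_{n}$ is the desired trapping neighbourhood. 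This yields the \emph{strange attractor} property of Definition~\ref{def:strange-chaotic-att} (maximality in an open trapping region), which is exactly the ``$\cc{A}$ is maximal'' conclusion announced in Theorem~A; the Milnor attractor property then follows a fortiori.

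Your argument, by contrast, never produces an open $V$ with $\lozil(\cl V)\subset V$: you use the closed, forward-invariant set $H$ directly and invoke the transitive point on $\cc{A}$ to rule out proper closed subsets. This is more economical if one only wants Milnor's Definition~\ref{def:attractor}, and it nicely exploits Corollary~\ref{corollary:stmani-dense-in-R}. What it does not give is the open trapping neighbourhood, so if ``topological attractor'' is read (as the paper's own proof suggests) in the sense of Definition~\ref{def:strange-chaotic-att}, your argument stops one step short. The missing ingredient is precisely the observation $\cc{A}\subset\inter H$ and the inductive thickening that the paper carries out.
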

	\begin{proof}
		We will find an open set $V\supset \cc{A}$ with $f(\cl V)\subset V$ and $V\subset H$. It will conclude our proof, since by Theorem \ref{trapping-region} $\bigcap_{n=0}^{\infty}f^{n}(H)=\cc{A}$.
		
		As $\cc{A}\subset \inter H$, we have $f^{p+1}(H)\subset \inter H$ for some $p\in\N$. The least such number is in fact the same as in the proof of Lemma \ref{trapping-region}. Hence we can find an open set $V_{0}\subset \inter H$ with $f^{p+1}(H)\subset V_{0}$. We inductively define, for $0<n\leq p$, open sets $V_{n}$ with $\cl V_{n}\subset f^{n}(\inter H)$ and $f(\cl V_{n-1})\subset V_{n}$. Assume we have constructed $V_{n-1}$. Note that $f(\cl V_{n-1})\subset f^{n}(\inter H)$. Therefore, it is enough to take sufficiently small neighbourhood $V_{n}\subset f^{n}(\inter H)$ of $f(\cl V_{n-1})$ so that $\cl V_{n}\subset f^{n}(\inter H)$. Let $V=\bigcup_{n=0}^{p}V_{n}$. We compute \[
		f(\cl V)\subset \bigcup_{n=0}^{p}f(\cl V_{n})\subset \bigcup_{n=1}^{p}V_{n}\cup f^{p+1}(H)\subset V.
		\] Notice that $V$ may not be an open disc. Therefore to construct attracting neighbourhood of $\cc{A}$ we have to add to $V$ all bounded components of its complement.
	\end{proof}
	\subsection{The orientation reversing case}\label{section:existence-main-results-OR}
	Our definition of orientation reversing Lozi-like map essentially does not differ from the one presented in \cite{Lozi-likemaps}. As the orientation reversing case of Theorem	\ref{theorem:attractor-as-homoclinic-class} was treated in aforementioned article, we do not repeat the argument here and refer the reader to Misiurewicz and \v{S}timac's work.\\

	\textbf{Acknowledgements.}	This work was supported by the National Science Centre, Poland (NCN), ~grant no. 2019/34/E/ST1/00237. I would like to thank Jan P. Boroński, Magda Foryś-Krawiec and Sonja \v{S}timac for many fruitful discussions.
	\bibliography{lozi_map_bib}
	\bibliographystyle{alphaurl}
	
\end{document}